\documentclass[12pt]{article}
%%%%%%%%%%%%%%%%%%%%%%%%%%%%%%%%%%%%%%%%%%%%%%%%%%%%%%%%%%%%%%%%%%%%%%%%%%%%%%%%%%%%%%%%%%%%%%%%%%%%
\textwidth= 6.5in
\textheight= 9.0in
\topmargin = -20pt
\evensidemargin=0pt
\oddsidemargin=0pt
\headsep=25pt
\parskip=10pt

%%%%%%%%%%%%%%%%%%%%%%%%%%%%%%%%%%%%%%%%%%%%%%%%%%%%%%%%%%%%%%%%%%%%%%%%%%%%%%%%%%%%%%%%%%%%%%%%%%%%
\usepackage{amsthm}
\usepackage{amsfonts}
\usepackage{amsmath}
\usepackage{amscd}
\usepackage{amssymb}
\usepackage{pstricks,pstricks-add,pst-math,pst-xkey}
\usepackage{stmaryrd}
\usepackage{newalg}
%%%%%%%%%%%%%%%%%%%%%%%%%%%%%%%%%%%%%%%%%%%%%%%%%%%%%%%%%%%%%%%%%%%%%%%%%%%%%%%%%%%%%%%%%%%%%%%%%%%%
\theoremstyle{plain}% default
\newtheorem{thm}{Theorem}[section]
\newtheorem{lem}[thm]{Lemma}
\newtheorem{prop}[thm]{Proposition}
\newtheorem{cor}[thm]{Corollary}
\newtheorem{prob2}{Problem}

\theoremstyle{definition}

\newtheorem*{prob}{Problem}

\theoremstyle{remark}

%%%%%%%%%%%%%%%%%%%%%%%%%%%%%%%%%%%%%%%%%%%%%%%%%%%%%%%%%%%%%%%%%%%%%%%%%%%%%%%%%%%%%%%%%%%%%%%%%%%%
\newcommand{\N}{\mathbb{N}}
\newcommand{\Z}{\mathbb{Z}}
\newcommand{\Zn}{\Z/n\Z}
\newcommand{\m}{\mathfrak{m}}

\newcommand{\ord}{\mathcal{O}}

\newcommand{\twoheadlongrightarrow}{\relbar\joinrel\twoheadrightarrow}
\DeclareMathOperator*{\lcm}{lcm}
\DeclareMathOperator*{\rad}{rad}
\renewcommand{\leq}{\leqslant}
\renewcommand{\geq}{\geqslant}
%%%%%%%%%%%%%%%%%%%%%%%%%%%%%%%%%%%%%%%%%%%%%%%%%%%%%%%%%%%%%%%%%%%%%%%%%%%%%%%%%%%%%%%%%%%%%%%%%%%%
\title{ON A PROBLEM OF MOLLUZZO CONCERNING STEINHAUS TRIANGLES IN FINITE CYCLIC GROUPS}
\date{07/09/08}
\author{Jonathan \textsc{Chappelon}}
%%%%%%%%%%%%%%%%%%%%%%%%%%%%%%%%%%%%%%%%%%%%%%%%%%%%%%%%%%%%%%%%%%%%%%%%%%%%%%%%%%%%%%%%%%%%%%%%%%%%
\begin{document}
\maketitle

\begin{abstract}
Let $X$ be a finite sequence of length $m\geq 1$ in $\Zn$. The \textit{derived sequence} $\partial X$of $X$ is the sequence of length $m-1$ obtained by pairwise adding consecutive terms of $X$. The collection of iterated derived sequences of $X$, until length $1$ is reached, determines a triangle, the \textit{Steinhaus triangle $\Delta X$ generated by the sequence $X$}. We say that $X$ is \textit{balanced} if its Steinhaus triangle $\Delta X$ contains each element of $\Zn$ with the same multiplicity. An obvious necessary condition for $m$ to be the length of a balanced sequence in $\Zn$ is that $n$ divides the binomial coefficient $\binom{m+1}{2}$. It is an open problem to determine whether this condition on $m$ is also sufficient. This problem was posed by Hugo Steinhaus in 1963 for $n=2$ and generalized by John C. Molluzzo in 1976 for $n\geq3$. So far, only the case $n=2$ has been solved, by Heiko Harborth in 1972. In this paper, we answer positively Molluzzo's problem in the case $n=3^k$ for all $k\geq1$. Moreover, for every odd integer $n\geq3$, we construct infinitely many balanced sequences in $\Zn$. This is achieved by analysing the Steinhaus triangles generated by arithmetic progressions. In contrast, for any $n$ even with $n\geq4$, it is not known whether there exist infinitely many balanced sequences in $\Zn$. As for arithmetic progressions, still for $n$ even, we show that they are never balanced, except for exactly 8 cases occurring at $n=2$ and $n=6$.
\end{abstract}

%%%%%%%%%%%%%%%%%%%%%%%%%%%%%%%%%%%%%%%%%%%%%%%%%%%%%%%%%%%%%%%%%%%%%%%%%%%%%%%%%%%%%%%%%%%%%%%%%%%%
\section{Introduction}%%%%%%%%%%%%%%%%%%%%%%%%%%%%%%%%%%%%%%%%%%%%%%%%%%%%%%%%%%%%%%%%%%%%%%%%%%%%%%
%%%%%%%%%%%%%%%%%%%%%%%%%%%%%%%%%%%%%%%%%%%%%%%%%%%%%%%%%%%%%%%%%%%%%%%%%%%%%%%%%%%%%%%%%%%%%%%%%%%%

Let $\Zn$ denote the finite cyclic group of order $n\geq1$. Let $X=(x_1,x_2,\ldots,x_m)$ be a sequence of length $m\geq2$ in $\Zn$. We define the \textit{derived sequence} $\partial X$ of $X$ as
$$
\partial X=(x_1+x_2,x_2+x_3,\ldots,x_{m-1}+x_m),
$$
where $+$ is the sum in $\Zn$. This is a finite sequence of length $m-1$. Iterating the derivation process, we denote by $\partial^iX$ the $i$th derived sequence of $X$, defined recursively as usual by $\partial^0X=X$ and $\partial^iX=\partial(\partial^{i-1}X)$ for all $i\geq1$. Then, the $i$th derived sequence of $X$ can be expressed by means of the elements of $X=(x_1,x_2,\ldots,x_m)$ as follows
$$
\partial^iX=\left(\sum_{k=0}^{i}{\binom{i}{k}x_{1+k}},\sum_{k=0}^{i}{\binom{i}{k}x_{2+k}},\ldots,\sum_{k=0}^{i}{\binom{i}{k}x_{m-i+k}}\right),
$$
for every $0\leq i\leq m-1$, where $\binom{i}{k}=\frac{i!}{k!(i-k)!}$ denotes the binomial coefficient for $0\leq k\leq i$.
\par The \textit{Steinhaus triangle $\Delta X$ generated by the sequence $X=(x_1,x_2,\ldots,x_m)$} is the multiset union, where all occurring multiplicities are added, of all iterated derived sequences of $X$, that is,
$$
\Delta X = \bigcup_{i=0}^{m-1}{\partial^iX} = \left\{ \sum_{k=0}^{i}{\binom{i}{k}x_{j+k}}\ \middle| \ 0\leq i\leq m-1,\ 1\leq j\leq m-i\right\}.
$$
Note that the Steinhaus triangle generated by a sequence of length $m\geq1$ is composed by $\binom{m+1}{2}$ elements of $\Zn$, counted with their multiplicity. For example, the Steinhaus triangle $\Delta X$ generated by the sequence $X=(0,1,2,2)$ in $\Z/3\Z$ can be represented as Figure~\ref{fig1}, where the $i$th row of the triangle is the $(i-1)$th derived sequence $\partial^{i-1}X$ of $X$.

\begin{figure}[!h]
\centering
\begin{pspicture}(3,1.9485)
\pspolygon(0.375,1.9485)(2.625,1.9485)(1.5,0)
\rput(1.5,0.433){$2$}
\rput(1.25,0.866){$1$}
\rput(1.75,0.866){$1$}
\rput(1,1.299){$1$}
\rput(1.5,1.299){$0$}
\rput(2,1.299){$1$}
\rput(0.75,1.732){$0$}
\rput(1.25,1.732){$1$}
\rput(1.75,1.732){$2$}
\rput(2.25,1.732){$2$}
\end{pspicture}
\caption{\label{fig1}A Steinhaus triangle in $\Z/3\Z$}
\end{figure}
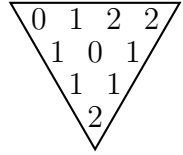

A finite sequence $X$ in $\Zn$ is said to be \textit{balanced} if each element of $\Zn$ occurs in the Steinhaus triangle $\Delta X$ with the same multiplicity. For instance, the sequence $(2,2,3,3)$ is balanced in $\Z/5\Z$. Indeed, as depicted in Figure~\ref{fig2}, its Steinhaus triangle is composed by each element of $\Z/5\Z$ occurring twice. Note that, for a sequence $X$ of length $m\geq1$ in $\Zn$, a necessary condition to be balanced is that the integer $n$ divides the binomial coefficient $\binom{m+1}{2}$, the cardinality of the Steinhaus triangle $\Delta X$.

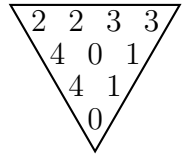
\begin{figure}[!h]
\centering
\begin{pspicture}(3,1.9485)
\pspolygon(0.375,1.9485)(2.625,1.9485)(1.5,0)
\rput(1.5,0.433){$0$}
\rput(1.25,0.866){$4$}
\rput(1.75,0.866){$1$}
\rput(1,1.299){$4$}
\rput(1.5,1.299){$0$}
\rput(2,1.299){$1$}
\rput(0.75,1.732){$2$}
\rput(1.25,1.732){$2$}
\rput(1.75,1.732){$3$}
\rput(2.25,1.732){$3$}
\end{pspicture}
\caption{\label{fig2}The Steinhaus triangle of a balanced sequence in $\Z/5\Z$}
\end{figure}

This concept was introduced by Hugo Steinhaus in 1963 \cite{steinbinprob}, who asked whether there exists, for each integer $m\equiv 0$ or $3 \pmod4$ (i.e. whenever the binomial coefficient $\binom{m+1}{2}$ is even), a balanced binary sequence of length $m$, i.e. a sequence of length $m$ in $\Z/2\Z$ whose Steinhaus triangle contains as many $0$'s as $1$'s. This problem was answered positively for the first time by Heiko Harborth in 1972 \cite{harbansw} by showing that, for every $m\equiv 0$ or $3\pmod4$, there exist at least four balanced binary sequences of length $m$. New solutions of the Steinhaus's problem recently appeared in \cite{eliahach2}, \cite{eliahach1} and \cite{elia3}. The possible number of ones in a binary Steinhaus triangle was explored in \cite{chang}. In 1976, John C. Molluzzo \cite{molluzzo} extended the definition of Steinhaus triangle to any finite cyclic group $\Zn$ and he posed the generalization of the Steinhaus's original problem.

\begin{prob}[Molluzzo, 1976]
Let $n$ be a positive integer. Given a positive integer $m$, is it true that there there exists a balanced sequence of length $m$ in $\Zn$ if and only if the binomial coefficient $\binom{m+1}{2}$ is divisible by $n$?
\end{prob}

This generalization of the Steinhaus's original problem corresponds to Question 8 of \cite{dymacek}. So far this problem was completely open for $n\geq3$. In this paper, we solve in the affirmative the case $n=3^k$ for all $k\geq1$. Moreover, we show that there exist infinitely many balanced sequences in each finite cyclic group of odd order.

This paper is organized as follows. In Section 2, we present generalities on balanced sequences in finite cyclic groups. In Section 3, we describe the structure of the Steinhaus triangle generated by an arithmetic progression in $\Zn$. This permits us to show, in Section 4, that there exists a positive integer $\alpha(n)$, for each odd number $n$, such that every arithmetic progression with invertible common difference and of length $m\equiv 0$ or $-1\pmod{\alpha(n)n}$ is a balanced sequence in $\Zn$. This result is refined in Section 5, by considering antisymmetric sequences. Particularly, this refinement answers in the affirmative Molluzzo's Problem in $\Z/3^k\Z$ for all $k\geq1$. In contrast with the results obtained in Sections 4 and 5, we show, in Section 6, that the arithmetic progressions in finite cyclic groups of even order $n$ are never balanced, except for exactly 8 cases occurring at $n=2$ and $n=6$. Finally, in Section 7, we conclude with several remarks and open subproblems of Molluzzo's problem.

%%%%%%%%%%%%%%%%%%%%%%%%%%%%%%%%%%%%%%%%%%%%%%%%%%%%%%%%%%%%%%%%%%%%%%%%%%%%%%%%%%%%%%%%%%%%%%%%%%%%
\section{Generalities on balanced sequences}%%%%%%%%%%%%%%%%%%%%%%%%%%%%%%%%%%%%%%%%%%%%%%%%%%%%%%%%
%%%%%%%%%%%%%%%%%%%%%%%%%%%%%%%%%%%%%%%%%%%%%%%%%%%%%%%%%%%%%%%%%%%%%%%%%%%%%%%%%%%%%%%%%%%%%%%%%%%%

In this section, we will establish the admissible lengths of balanced sequences in $\Zn$ and study the behaviour of balanced sequences under projection maps.

%%%%%%%%%%%%%%%%%%%%%%%%%%%%%%%%%%%%%%%%%%%%%%%%%%%%%%%%%%%%%%%%%%%%%%%%%%%%%%%%%%%%%%%%%%%%%%%%%%%%
\subsection{On the length of a balanced sequence}%%%%%%%%%%%%%%%%%%%%%%%%%%%%%%%%%%%%%%%%%%%%%%%%%%%
%%%%%%%%%%%%%%%%%%%%%%%%%%%%%%%%%%%%%%%%%%%%%%%%%%%%%%%%%%%%%%%%%%%%%%%%%%%%%%%%%%%%%%%%%%%%%%%%%%%%
The set of all prime numbers is denoted by $\mathcal{P}$. For every prime number $p$, we denote by $v_p(n)$ the $p$-adic valuation of $n$, i.e. the greatest exponent $e\geq0$ for which $p^e$ divides $n$. The prime factorization of $n$ may then be written as
$$
n=\prod_{p\in\mathcal{P}}p^{v_p(n)}.
$$
We denote by $\omega(n)$ the number of distinct prime factors of $n$, i.e. the number of primes $p$ for which $v_p(n)\geq1$.

As seen in Section 1, a necessary condition for a sequence of length $m\geq1$ in $\Zn$ to be balanced is that $n$ divides the binomial coefficient $\binom{m+1}{2}$, the cardinality of the Steinhaus triangle $\Delta X$. The set of all positive integers $m$ satisfying this divisibility condition is described in the following theorem.

\begin{thm}\label{thm6}
Let $n$ be a positive integer. The set of all positive integers $m$ such that the binomial coefficient $\binom{m+1}{2}$ is a multiple of $n$ is a disjoint union of $2^{\omega(n)}$ distinct classes modulo $2n$ if $n$ is even, and of the same number of distinct classes modulo $n$ if $n$ is odd. This set comprises the classes $2n\N$ and $(2n-1)+2n\N$ if $n$ is even, and the classes $n\N$ and $(n-1)+n\N$ if $n$ is odd.
\end{thm}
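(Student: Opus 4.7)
The plan is to reinterpret the divisibility $n \mid \binom{m+1}{2}$ as $2n \mid m(m+1)$, and then exploit the coprimality $\gcd(m,m+1)=1$ together with the Chinese Remainder Theorem applied to the prime factorization of $2n$.

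First I would note that for each prime power $q=p^e$ with $e=v_p(2n)$, the coprimality $\gcd(m,m+1)=1$ forces all of $q$ to lie either in $m$ or in $m+1$; so the divisibility $2n\mid m(m+1)$ is equivalent to selecting, for every prime $p\mid 2n$, a choice $\varepsilon_p\in\{0,-1\}$ and requiring $m\equiv\varepsilon_p\pmod{p^{v_p(2n)}}$. By CRT, each assignment $(\varepsilon_p)_p$ determines a unique residue class modulo $2n$, and distinct assignments give distinct classes.

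Now I would split on the parity of $n$. If $n$ is odd, then $v_2(2n)=1$, and the condition at $p=2$ is automatic since $m(m+1)$ is always even; hence the prime $2$ contributes no constraint, and only the $\omega(n)$ odd primes dividing $n$ each contribute a binary choice. By CRT applied to $n=\prod_{p\mid n}p^{v_p(n)}$ this gives $2^{\omega(n)}$ classes modulo $n$. The assignments $\varepsilon_p\equiv 0$ for every $p$ and $\varepsilon_p\equiv -1$ for every $p$ yield, respectively, the classes $n\N$ and $(n-1)+n\N$. If $n$ is even, then $v_2(2n)=v_2(n)+1\geq 2$, and the condition at $p=2$ is now genuine (it picks out which of $m,m+1$ carries the full $2$-power). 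So we get a binary choice at each of the $\omega(n)$ primes dividing $n$ (the prime $2$ included), producing $2^{\omega(n)}$ classes modulo $2n$; the two extreme assignments then give the classes $2n\N$ and $(2n-1)+2n\N$.

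The only real subtlety, and the step I would be most careful about, is bookkeeping the $2$-adic valuation when $n$ is even: the modulus has to be $2n$ rather than $n$ precisely because $v_2(2n)=v_2(n)+1$ strictly exceeds $v_2(n)$, so a class modulo $n$ does not determine whether $2n\mid m(m+1)$. Apart from this, the argument is a direct application of CRT to the factorization of $2n$, and no deeper ingredient is needed.
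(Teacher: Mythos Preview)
Your proposal is correct and follows essentially the same route as the paper: both rewrite the condition as $2n\mid m(m+1)$, use $\gcd(m,m+1)=1$ to reduce each prime-power congruence to a binary choice $m\equiv 0$ or $-1$, and then invoke the Chinese Remainder Theorem, treating the parity of $n$ separately to account for the extra factor of $2$. Your discussion of why the modulus must be $2n$ in the even case is in fact slightly more explicit than the paper's.
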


\begin{proof}
Let $n$ and $m$ be two positive integers. Then,
$$
\binom{m+1}{2}\equiv0\pmod{n}
\begin{array}[t]{l}
\Longleftrightarrow m(m+1)\equiv0\pmod{2n}\\
\Longleftrightarrow \left\{
\begin{array}{l}
m(m+1)\equiv0\pmod{2^{v_2(n)+1}}\\
m(m+1)\equiv0\pmod{p^{v_p(n)}},\ \forall p\in\mathcal{P}\setminus\{2\}
\end{array}\right.\\
\Longleftrightarrow \left\{
\begin{array}{l}
m\equiv a_2\pmod{2^{v_2(n)+1}}\\
m\equiv a_p\pmod{p^{v_p(n)}},\ \forall p\in\mathcal{P}\setminus\{2\}
\end{array}\right.
\end{array}
$$
with $a_p\in\{-1,0\}$ for every prime $p$. Each integer $m$ of the set appears then as a solution of a system of congruences composed by $\omega(n)$ non-trivial equations. By the Chinese remainder theorem there exists a unique solution modulo $n$ or modulo $2n$ according to the parity of $n$. This permits us to conclude that there exist $2^{\omega(n)}$ such classes modulo $2n$ (resp. modulo $n$) for every even (resp. odd) number $n$. Particularly, if $n$ is even (resp. odd) and $a_p=0$ for every prime $p$, then the positive integers $m$, such that the binomial $\binom{m+1}{2}$ is a multiple of $n$, constitute the class $2n\N$ (resp. the class $n\N$). By the same way, if $n$ is even (resp. odd) and $a_p=-1$ for every prime $p$, then such positive integers $m$ constitute the class $(2n-1)+2n\N$ (resp. the class $(n-1)+n\N$).
\end{proof}

\begin{cor}\label{cor1}
Let $p$ be an odd prime number and $k$ be a positive integer. For every positive integer $m$, we have
$$
\binom{m+1}{2}\equiv0\pmod{p^k} \Longleftrightarrow m\equiv0\ \text{or}\ -1\pmod{p^k}.
$$
Similarly, for every positive integer $m$, we have
$$
\binom{m+1}{2}\equiv0\pmod{2^k} \Longleftrightarrow m\equiv0\ \text{or}\ -1\pmod{2^{k+1}}.
$$
\end{cor}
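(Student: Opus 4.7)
The corollary is an immediate specialisation of Theorem~\ref{thm6} to the case $n=p^k$, so my plan is simply to apply that theorem and check that the two classes it produces match the ones described in the statement.

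First I would observe that for $n=p^k$ the prime factorisation has only one prime, so $\omega(n)=1$ and Theorem~\ref{thm6} gives exactly $2^{\omega(n)}=2$ residue classes. The theorem also explicitly identifies those two classes: $2n\N$ and $(2n-1)+2n\N$ when $n$ is even, and $n\N$ and $(n-1)+n\N$ when $n$ is odd. Since there are only two classes in total and the theorem has already named two of them, these must be all of them.

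Next I would split into the two cases. If $p$ is an odd prime, then $n=p^k$ is odd, so the admissible values of $m$ form the classes $p^k\N$ and $(p^k-1)+p^k\N$, which is precisely the condition $m\equiv 0$ or $-1\pmod{p^k}$. If $p=2$, then $n=2^k$ is even (here $k\geq 1$), so the admissible classes are $2\cdot 2^k\N=2^{k+1}\N$ and $(2\cdot 2^k-1)+2\cdot 2^k\N=(2^{k+1}-1)+2^{k+1}\N$, which is exactly $m\equiv 0$ or $-1\pmod{2^{k+1}}$.

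There is no real obstacle here; the only thing to be careful about is the shift in modulus between the odd and even cases, which comes from the fact that in the chain of equivalences inside the proof of Theorem~\ref{thm6} one has to pass from $m(m+1)\equiv 0\pmod{2n}$ and then single out the $2$-part as $2^{v_2(n)+1}$ rather than $2^{v_2(n)}$. For $n=2^k$ this raises the modulus from $2^k$ to $2^{k+1}$, whereas for odd $p^k$ no such adjustment occurs. Once this is noted, the corollary follows with no further calculation.
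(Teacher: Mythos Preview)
Your proposal is correct and matches the paper's approach exactly: the paper gives no separate proof for this corollary, treating it as an immediate consequence of Theorem~\ref{thm6} for $n=p^k$ with $\omega(n)=1$. Your observation that the two explicitly named classes in Theorem~\ref{thm6} must exhaust all $2^{\omega(n)}=2$ classes, together with the even/odd case split, is precisely the intended reading.
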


For instance, for $n=825=3\cdot5^2\cdot11$, the set of positive integers $m$ such that the binomial coefficient $\binom{m+1}{2}$ is divisible by $825$ is the disjoint union of the $8$ classes $a+825\N$ with $a\in\{0,99,275,374,450,549,725,824\}$.

%%%%%%%%%%%%%%%%%%%%%%%%%%%%%%%%%%%%%%%%%%%%%%%%%%%%%%%%%%%%%%%%%%%%%%%%%%%%%%%%%%%%%%%%%%%%%%%%%%%%
\subsection{Balanced sequences under projection maps}%%%%%%%%%%%%%%%%%%%%%%%%%%%%%%%%%%%%%%%%%%%%%%%
%%%%%%%%%%%%%%%%%%%%%%%%%%%%%%%%%%%%%%%%%%%%%%%%%%%%%%%%%%%%%%%%%%%%%%%%%%%%%%%%%%%%%%%%%%%%%%%%%%%%

For every finite multiset $M$ of $\Zn$, we define and denote by $\m_M$ the multiplicity function of $M$ as the function
$$
\m_M : \Zn \longrightarrow \N
$$
which assigns to each element $x$ in $\Zn$ the number of occurrence $\m_M(x)$ of $x$ in the multiset $M$. We agree that the multiplicity function $\m_M$ vanishes at every $x$ not in $M$.
\par As usual, the cardinality $|M|$ of a finite multiset $M$ is the total number of elements in $M$, counted with multiplicity, that is,
$$
|M|=\sum_{x\in\Zn}\m_M(x).
$$
Let $X$ be a sequence of length $m\geq1$ in $\Zn$. Since the Steinhaus triangle $\Delta X$ is a multiset of cardinality $\binom{m+1}{2}$, it follows that the sequence $X$ is balanced if, and only if, the multiset $\Delta X$ has a constant multiplicity function $\m_{\Delta X}$ equal to $\frac{1}{n}\binom{m+1}{2}$.

For every factor $q$ of the positive integer $n$, we denote by $\pi_q$ the canonical surjective morphism $\pi_q : \Zn\twoheadlongrightarrow\Z/q\Z$. For a finite sequence $X=(x_1,x_2,\ldots,x_m)$ of length $m\geq1$ in $\Zn$, we define, and denote by
$$
\pi_q(X)=\left( \pi_q(x_1),\pi_q(x_2),\ldots,\pi_q(x_m) \right),
$$
its projected sequence in $\Z/q\Z$. We now study the behaviour of balanced sequences in $\Zn$ under the projection morphism $\pi_q : \Zn\twoheadlongrightarrow\Z/q\Z$.

\begin{thm}\label{thm2}
Let $q$ be a divisor of $n$ and $X$ be a sequence of length $m\geq1$ in $\Zn$. Then, the sequence $X$ is balanced if, and only if, its projected sequence $\pi_q\left(X\right)$ is also balanced and the multiplicity function $\m_{\Delta X} : \Zn \longrightarrow \N$ is constant on each coset of the subgroup $q\Zn$.
\end{thm}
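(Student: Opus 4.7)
The plan is to exploit the fact that the derivation operator $\partial$ is $\mathbb{Z}$-linear and therefore commutes with the ring homomorphism $\pi_q$. Concretely, from the explicit formula for $\partial^i X$ in terms of binomial coefficients given in the introduction, one obtains
$$
\partial^i(\pi_q(X)) = \pi_q(\partial^i X)
$$
for every $i$, and taking the multiset union over $0\leq i\leq m-1$ yields the identity $\Delta(\pi_q(X)) = \pi_q(\Delta X)$ at the level of multisets (i.e.\ as the pushforward under $\pi_q$). I would state this commutation as a preliminary lemma or simply as the opening observation of the proof.

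Translating this into multiplicities, for every $y\in\Z/q\Z$ I would record the key identity
$$
\m_{\Delta(\pi_q(X))}(y) = \sum_{\substack{x\in\Zn\\ \pi_q(x)=y}} \m_{\Delta X}(x) = \sum_{z\in q\Zn} \m_{\Delta X}(\widetilde{y}+z),
$$
where $\widetilde{y}$ is any fixed lift of $y$ to $\Zn$, and the fibre $\pi_q^{-1}(y)$ has exactly $n/q$ elements, namely the coset $\widetilde{y}+q\Zn$.

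For the forward direction, if $X$ is balanced then $\m_{\Delta X}$ is the constant function $\binom{m+1}{2}/n$ on $\Zn$; this is trivially constant on every coset of $q\Zn$, and substituting into the identity above gives $\m_{\Delta(\pi_q(X))}(y) = (n/q)\cdot\binom{m+1}{2}/n = \binom{m+1}{2}/q$ for all $y$, which is exactly the balancedness of $\pi_q(X)$.

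For the converse, assume $\pi_q(X)$ is balanced and $\m_{\Delta X}$ is constant on each coset of $q\Zn$. The balancedness of $\pi_q(X)$ forces $\m_{\Delta(\pi_q(X))}(y) = \binom{m+1}{2}/q$ for all $y\in\Z/q\Z$, while constancy on the coset $\widetilde{y}+q\Zn$ makes every one of the $n/q$ summands in the identity equal to $\m_{\Delta X}(\widetilde{y})$. Dividing, $\m_{\Delta X}(\widetilde{y}) = \binom{m+1}{2}/n$ for every coset representative $\widetilde{y}$, hence $\m_{\Delta X}$ is constant on all of $\Zn$ and $X$ is balanced. No real obstacle arises; the only point requiring care is being explicit that $\Delta(\pi_q(X))$ really is the pushforward multiset of $\Delta X$, so that multiplicities on fibres add correctly.
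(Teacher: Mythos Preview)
Your proposal is correct and follows essentially the same route as the paper: both rest on the single identity $\m_{\Delta\pi_q(X)}(\pi_q(x)) = \sum_{k=0}^{n/q-1}\m_{\Delta X}(x+kq)$, from which the equivalence is immediate. The paper's proof in fact states only this identity and declares the result proved; your version spells out the commutation $\Delta(\pi_q(X))=\pi_q(\Delta X)$ and both implications explicitly, which is more than the paper bothers to do but in no way different in substance.
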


\begin{proof}
For every $x$ in $\Zn$, it is clear that the multiplicity of $\pi_q(x)$ in $\Delta\pi_q\left(X\right)$ is the sum of the multiplicities in $\Delta X$ of all the elements of the coset $x+q\Zn$, that is,
$$
\m_{\Delta \pi_q(X)}(\pi_q(x)) = \sum_{k=0}^{\frac{n}{q}-1}{\m_{\Delta X}(x+kq)},\ \forall x\in\Zn.
$$
This completes the proof.
\end{proof}

%%%%%%%%%%%%%%%%%%%%%%%%%%%%%%%%%%%%%%%%%%%%%%%%%%%%%%%%%%%%%%%%%%%%%%%%%%%%%%%%%%%%%%%%%%%%%%%%%%%%
\section{Steinhaus triangles of arithmetic progressions}%%%%%%%%%%%%%%%%%%%%%%%%%%%%%%%%%%%%%%%%%%%%
%%%%%%%%%%%%%%%%%%%%%%%%%%%%%%%%%%%%%%%%%%%%%%%%%%%%%%%%%%%%%%%%%%%%%%%%%%%%%%%%%%%%%%%%%%%%%%%%%%%%

In this section we will describe the structure of the Steinhaus triangle associated to an arithmetic progression of $\Zn$. We denote by
$$
AP(a,d,m)=\left( a,a+d,a+2d,\ldots,a+(m-1)d \right)
$$
the arithmetic progression beginning with $a\in\Zn$, with common difference $d\in\Zn$ and of length $m\geq1$. We begin by analysing the iterated derived sequences of an arithmetic progression in $\Zn$. First, its derived sequence is also an arithmetic progression in $\Zn$. More precisely, we have

\begin{prop}\label{prop4}
Let $n$ be a positive integer and let $a$ and $d$ be in $\Zn$. Then, the $i$th derived sequence of the arithmetic progression $AP(a,d,m)$ is the arithmetic progression
$$
\partial^i AP(a,d,m) = AP\left(2^ia+2^{i-1}id,2^id,m-i\right),
$$
for every $0\leq i\leq m-1$.
\end{prop}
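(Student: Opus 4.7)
The plan is to prove this by induction on $i$, using as the engine a direct one-step computation of the derived sequence of a generic arithmetic progression.

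First, I would verify the base case $i=0$, which is trivial since $\partial^0 AP(a,d,m)=AP(a,d,m)$ and the formula reads $AP(2^0 a + 2^{-1}\cdot 0\cdot d, 2^0 d, m) = AP(a,d,m)$ (here $2^{-1}\cdot 0 = 0$ by the usual convention). Next, I would establish the one-step fact
\[
\partial\,AP(\alpha,\delta,\ell) \;=\; AP(2\alpha+\delta,\,2\delta,\,\ell-1),
\]
which follows immediately from the definition of $\partial$: the $j$th entry of $\partial AP(\alpha,\delta,\ell)$ is $(\alpha+(j-1)\delta)+(\alpha+j\delta) = (2\alpha+\delta)+(j-1)(2\delta)$.

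For the inductive step, assume $\partial^i AP(a,d,m) = AP(2^i a+2^{i-1}id,\,2^i d,\,m-i)$. Applying $\partial$ once more and invoking the one-step formula with $\alpha=2^i a+2^{i-1}id$ and $\delta=2^i d$ gives
\[
\partial^{i+1}AP(a,d,m) = AP\bigl(2(2^i a+2^{i-1}id)+2^i d,\ 2\cdot 2^i d,\ m-i-1\bigr),
\]
and the first entry simplifies to $2^{i+1}a + 2^i i d + 2^i d = 2^{i+1}a + 2^i(i+1)d$, exactly matching the claim at index $i+1$.

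As an alternative (or cross-check), one can prove the formula in closed form from the explicit expression $\partial^i X = \bigl(\sum_{k=0}^{i}\binom{i}{k}x_{j+k}\bigr)_{j=1}^{m-i}$ recorded in the introduction: substituting $x_{j+k}=a+(j+k-1)d$ and using $\sum_{k=0}^{i}\binom{i}{k}=2^i$ and $\sum_{k=0}^{i}k\binom{i}{k}=i\,2^{i-1}$ yields
\[
\sum_{k=0}^{i}\binom{i}{k}(a+(j+k-1)d) = 2^i a + 2^{i-1}id + (j-1)\,2^i d,
\]
which is the $j$th term of $AP(2^i a+2^{i-1}id,\,2^i d,\,m-i)$. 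Either route is essentially computational; there is no real obstacle, only the cosmetic point that the stated formula should be read with the convention $2^{-1}\cdot 0 = 0$ at $i=0$.
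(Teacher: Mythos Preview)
Your proposal is correct. The paper's proof is precisely your ``alternative'' route: it substitutes $x_{j+k}=a+(j+k-1)d$ into the closed form $\partial^iX=\bigl(\sum_{k=0}^{i}\binom{i}{k}x_{j+k}\bigr)_j$ and applies the identities $\sum_k\binom{i}{k}=2^i$ and $\sum_k k\binom{i}{k}=i\,2^{i-1}$ to obtain $y_j=(2^ia+2^{i-1}id)+(j-1)2^id$. Your primary inductive argument via the one-step formula $\partial AP(\alpha,\delta,\ell)=AP(2\alpha+\delta,2\delta,\ell-1)$ is an equally valid and arguably cleaner path; it avoids invoking the binomial-sum formula from the introduction and sidesteps the $2^{-1}\cdot 0$ convention at $i=0$ if you start the induction at $i=1$ (or simply take $i=0$ as the tautological base case $\partial^0X=X$). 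Either way the content is the same elementary computation.
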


\begin{proof}
If we set $X=AP(a,d,m)=(x_1,x_2,\ldots,x_m)$ and $\partial^i X=(y_1,y_2,\ldots,y_{m-i})$, then we have
$$
y_j
\begin{array}[t]{l}
= \displaystyle\sum_{k=0}^i\binom{i}{k}x_{j+k} = \sum_{k=0}^i\binom{i}{k}\left(a+(j+k-1)d\right) = \sum_{k=0}^i\binom{i}{k}\left(a+(j-1)d\right) + \sum_{k=0}^i\binom{i}{k}kd\\
= 2^i\left(a+(j-1)d\right) + 2^{i-1}id = \left( 2^ia+2^{i-1}id \right) + (j-1)2^id,
\end{array}
$$
for all $1\leq j\leq m-i$.
\end{proof}

For every sequence $X$ of length $m\geq1$ in $\Zn$, we denote by ${\Delta X}(i,j)$ the $j$th element of the $i$th row of the Steinhaus triangle $\Delta X$, i.e. the $j$th element of the $(i-1)$th derived sequence $\partial^{i-1}X$ of $X$, for all $1\leq i\leq m$ and all $1\leq j\leq m-i+1$. For example, in this notation, the $j$th element of the sequence $X$ is $\Delta X(1,j)$.

We now describe the coefficients of the Steinhaus triangle generated by an arithmetic progression in $\Zn$.

\begin{prop}\label{prop5}
Let $n$ be a positive integer. Let $a$ and $d$ be in $\Zn$ and $X=AP(a,d,m)$ be an arithmetic progression in $\Zn$. Then, we have
$$
\left\{
\begin{array}{ll}
{\Delta X}(1,j) = a+(j-1)d & ,\ \forall 1\leq j\leq m,\\
{\Delta X}(i,j) = 2^{i-1}a+2^{i-2}(2j+i-3)d & ,\ \forall 2\leq i\leq m,\ \forall 1\leq j\leq m-i+1.
\end{array}
\right.
$$
\end{prop}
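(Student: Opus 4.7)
The plan is to derive the formula as a direct consequence of Proposition~\ref{prop4}, since $\Delta X(i,j)$ is by definition the $j$th term of the $(i-1)$th derived sequence $\partial^{i-1}X$.

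First I would dispose of the case $i=1$: by definition $\Delta X(1,j)$ is the $j$th entry of $X=AP(a,d,m)$ itself, which is $a+(j-1)d$, matching the first line of the stated formula.

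For $i\geq 2$, I would apply Proposition~\ref{prop4} with exponent $i-1$ in place of $i$ (this is legal since $1\leq i-1\leq m-1$), giving
\[
\partial^{i-1}X \;=\; AP\!\left(2^{i-1}a+2^{i-2}(i-1)d,\,2^{i-1}d,\,m-i+1\right).
\]
The $j$th element of an arithmetic progression $AP(A,D,L)$ is $A+(j-1)D$, so I would read off
\[
\Delta X(i,j) \;=\; 2^{i-1}a+2^{i-2}(i-1)d+(j-1)\,2^{i-1}d.
\]
Factoring the coefficient of $d$ as $2^{i-2}\bigl((i-1)+2(j-1)\bigr) = 2^{i-2}(2j+i-3)$ gives exactly the claimed expression.

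There is no real obstacle here: the proposition is essentially a restatement of Proposition~\ref{prop4} in the $(i,j)$-coordinate notation for the triangle, combined with the general term formula for an arithmetic progression. The only care needed is the index shift between "$i$th derived sequence" and "$i$th row of $\Delta X$," which is why the case $i=1$ is handled separately and the exponent becomes $i-1$ in the general case.
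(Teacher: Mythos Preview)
Your proof is correct and follows exactly the paper's own approach: the paper's proof consists of the single sentence that this proposition is merely a reformulation of Proposition~\ref{prop4} in the $\Delta X(i,j)$ notation, which is precisely what you carry out in detail.
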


\begin{proof}
This is merely a reformulation of Proposition \ref{prop4} using the notation $\Delta X(i,j)$ introduced above.
\end{proof}

Let $X$ be a finite sequence in $\Zn$. Every finite sequence $Y$ such that $\partial Y=X$ is called \textit{primitive sequence of $X$}. By definition of the derivation process, each finite sequence admits exactly $n$ primitives. However, for $n$ odd, if $X$ is an arithmetic progression in $\Zn$, then there is exactly one primitive of $X$ which is itself an arithmetic progression.

\begin{prop}\label{prop1}
Let $n$ be an odd number and let $a$ and $d$ be in $\Zn$. Then, the sequence $AP(2^{-1}a-2^{-2}d,2^{-1}d,m+1)$ is the only arithmetic progression whose derived sequence is the arithmetic progression $AP(a,d,m)$.
\end{prop}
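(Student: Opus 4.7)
The plan is to reduce the problem to solving a two-variable linear system in $\Zn$, exploiting that $2$ is invertible when $n$ is odd.

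First I would recall that, by Proposition \ref{prop4} applied with $i=1$, the derived sequence of any arithmetic progression $AP(\alpha,\delta,m+1)$ is
\[
\partial AP(\alpha,\delta,m+1) = AP(2\alpha + \delta,\ 2\delta,\ m).
\]
So asking that this derived sequence equal $AP(a,d,m)$ amounts to requiring the two equalities $2\alpha+\delta = a$ and $2\delta = d$ in $\Zn$, together with matching length $m$.

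Since $n$ is odd, $2$ is invertible in $\Zn$, so $2^{-1}$ and $2^{-2}$ make sense. The second equation forces $\delta = 2^{-1}d$ uniquely, and substituting into the first gives $\alpha = 2^{-1}(a-\delta) = 2^{-1}a - 2^{-2}d$, again uniquely. This proves both existence and uniqueness of the arithmetic-progression primitive: it must be $AP(2^{-1}a - 2^{-2}d,\ 2^{-1}d,\ m+1)$.

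There is essentially no obstacle: the argument is a one-line linear-algebra computation made available by the invertibility of $2$ in $\Zn$ for odd $n$. The only subtlety worth flagging explicitly is that the statement concerns uniqueness \emph{among arithmetic progressions} (not uniqueness of the primitive in general, of which there are $n$, one for each choice of first term); this is built into the proof since we parametrize by $(\alpha,\delta)$ and find a unique solution to the resulting $2\times 2$ system.
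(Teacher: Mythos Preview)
Your argument is correct and is essentially the same as the paper's: both rely on Proposition~\ref{prop4} with $i=1$ to reduce the question to the linear system $2\alpha+\delta=a$, $2\delta=d$ in $\Zn$, and then invoke the invertibility of $2$ for $n$ odd. The only cosmetic difference is that the paper first verifies the candidate and then proves uniqueness separately, whereas you solve the system directly; the content is identical.
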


\begin{proof}
By Proposition \ref{prop4}, the derived sequence of $AP(2^{-1}a-2^{-2}d,2^{-1}d,m+1)$ is the arithmetic progression $AP(a,d,m)$, that is,
$$
\partial AP(2^{-1}a-2^{-2}d,2^{-1}d,m+1)=AP(a,d,m).
$$
Suppose now that the arithmetic progressions $AP(a_1,d_1,m+1)$ and $AP(a_2,d_2,m+1)$ have the same derived sequence, that is,
$$
\partial AP(a_1,d_1,m+1) = \partial AP(a_2,d_2,m+1).
$$
Then, by Proposition \ref{prop4}, we have
$$
AP(2a_1+d_1,2d_1,m) = AP(2a_2+d_2,2d_2,m).
$$
It follows that $2a_1+d_1=2a_2+d_2$ and $2d_1=2d_2$. Since $2$ is invertible in $\Zn$, this leads to the equalities $a_1=a_2$ and $d_1=d_2$ and so the unicity of the statement is proved.
\end{proof}

In contrast, for $n$ even, there is no such unicity statement. For example, in $\Z/8\Z$, the arithmetic progressions $(3,7,3,7,3)$ and $(1,1,1,1,1)$ are distinct but have the same derived sequence $(2,2,2,2)$.

%%%%%%%%%%%%%%%%%%%%%%%%%%%%%%%%%%%%%%%%%%%%%%%%%%%%%%%%%%%%%%%%%%%%%%%%%%%%%%%%%%%%%%%%%%%%%%%%%%%%
\section{Balanced arithmetic progressions in $\Zn$ for $n$ odd}%%%%%%%%%%%%%%%%%%%%%%%%%%%%%%%%%%%%%
%%%%%%%%%%%%%%%%%%%%%%%%%%%%%%%%%%%%%%%%%%%%%%%%%%%%%%%%%%%%%%%%%%%%%%%%%%%%%%%%%%%%%%%%%%%%%%%%%%%%

The integer $n$ is assumed to be odd throughout this section. We begin by showing that the common difference of a balanced arithmetic progression in $\Zn$ must be invertible.

\begin{thm}\label{thm3}
Let $n$ be an odd number and let $a$ and $d$ be in $\Zn$. If $d$ is non-invertible, then the arithmetic progression $AP(a,d,m)$ is not balanced for every positive integer $m$.
\end{thm}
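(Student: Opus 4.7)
The plan is to reduce to a projection modulo an odd prime $p$ dividing $\gcd(d,n)$, and use Theorem~\ref{thm2} together with the explicit formula from Proposition~\ref{prop5} to rule out balance.

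Suppose $d$ is non-invertible in $\Zn$. Since $n$ is odd, $\gcd(d,n)>1$ and every prime divisor of $\gcd(d,n)$ is odd; pick such an odd prime $p$, and consider the canonical projection $\pi_p:\Zn\twoheadlongrightarrow\Z/p\Z$. Because $p\mid d$, we have $\pi_p(d)=0$, so
$$
\pi_p(AP(a,d,m)) = (\pi_p(a),\pi_p(a),\ldots,\pi_p(a)),
$$
a constant sequence of length $m$ in $\Z/p\Z$. By Theorem~\ref{thm2}, if $AP(a,d,m)$ were balanced in $\Zn$ then $\pi_p(AP(a,d,m))$ would have to be balanced in $\Z/p\Z$. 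So it suffices to prove that a constant sequence $(c,c,\ldots,c)$ of length $m\geq 1$ is never balanced in $\Z/p\Z$ when $p$ is an odd prime.

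To see this, apply Proposition~\ref{prop5} to the arithmetic progression $AP(c,0,m)$ (which is exactly this constant sequence): every entry in the $i$th row of $\Delta(c,c,\ldots,c)$ equals $2^{i-1}c$. Hence every element appearing in the triangle belongs to the set $\{2^{i-1}c : 1\leq i\leq m\}$. Two cases now occur. If $c=0$, then the triangle contains only $0$, so for $m\geq 1$ the multiplicity of any nonzero element of $\Z/p\Z$ is $0$ while that of $0$ is $\binom{m+1}{2}\geq 1$. If $c\neq 0$, then $c$ is a unit of $\Z/p\Z$, and since $2$ is also a unit, $2^{i-1}c\neq 0$ for every $i$; thus $0$ does not appear in the triangle at all, while any value $2^{i-1}c$ does. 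In both cases the element $0$ has a different multiplicity from some other element, so the projected sequence is not balanced, contradicting the assumed balance of $AP(a,d,m)$.

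The only subtlety worth flagging is the choice of an odd prime divisor of $\gcd(d,n)$: this is legitimate precisely because $n$ is odd, which keeps $2$ invertible modulo $p$ and makes the argument of the previous paragraph go through. No deeper obstacle arises; the proof is essentially a two-line projection argument, with Proposition~\ref{prop5} providing the explicit description of the Steinhaus triangle of a constant sequence.
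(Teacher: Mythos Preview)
Your proof is correct and follows essentially the same strategy as the paper's: project to a quotient in which $d$ vanishes, then use the formula from Proposition~\ref{prop5} to see that the Steinhaus triangle of a constant sequence contains only powers of $2$ times the constant, hence either every entry is $0$ or none is, contradicting balance via Theorem~\ref{thm2}. The one cosmetic difference is that the paper projects onto $\Z/q\Z$ with $q=\gcd(d_0,n)$, whereas you project onto $\Z/p\Z$ for a single odd prime $p\mid\gcd(d_0,n)$; your choice makes the ``nonzero'' case marginally cleaner since $\Z/p\Z$ is a field, but the underlying argument is the same.
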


\begin{proof}
Ab absurdo, suppose that there exists a balanced arithmetic progression
$$
X=AP(a,d,m)
$$
with non-invertible common difference $d$ in $\Zn$. We set
$$
q=\gcd(n,d_0)\neq1
$$
where $d_0$ is any integer whose residue class modulo $n$ is $d$. We consider the canonical surjective morphism $\pi_q : \Zn \twoheadlongrightarrow\Z/q\Z$ and the arithmetic progression
$$
\pi_q(X)=AP(\pi_q(a),\pi_q(d),m)=AP(\pi_q(a),0,m),
$$
which is a constant sequence in $\Z/q\Z$. Theorem~\ref{thm2} implies that the sequence $\pi_q(X)$ is balanced. Therefore there exists at least one coefficient in the Steinhaus triangle $\Delta \pi_q(X)$ which is zero, say $\Delta \pi_q(X)(i,j)=0$, and then we obtain $2^{i-1}\pi_q(a)=0$ by Proposition \ref{prop5}. Since $2$ is invertible in $\Z/q\Z$, it follows that $\pi_q(a)=0$ and hence that $\pi_q(X)$ is the zero-sequence of length $m$ in $\Z/q\Z$, in contradiction with the fact that $\pi_q(X)$ is balanced.
\end{proof}

We continue by studying arithmetic progressions with invertible common differences.

For every odd number $n$, we denote by \textit{$\alpha(n)$ the multiplicative order of $2^n$ modulo $n$}, i.e. the smallest positive integer $e$ such that $2^{en} \equiv 1 \pmod n$, namely
$$
\alpha(n) = \min\left\{ e\in\N^* \ \middle| \ 2^{en}\equiv 1\pmod n \right\}.
$$
\par For every positive integer $n$, we denote by $\varphi(n)$ \textit{the totient of $n$}, i.e. the number of positive integers less than or equal to $n$ that are coprime to $n$. Note that, for $n$ odd, the integer $\alpha(n)$ divides $\varphi(n)$.

In contrast with Theorem \ref{thm3}, the following result states that, for each $a$ and $d$ in $\Zn$ with $d$ invertible, there are infinitely many lengths $m$ for which the arithmetic progression $AP(a,d,m)$ is balanced.

\begin{thm}\label{mainthm1}
Let $n$ be an odd number. Let $a$ and $d$ be in $\Zn$ with $d$ invertible. Then, the arithmetic progression $AP(a,d,m)$ is balanced for every positive integer $m\equiv0$ or $-1\pmod{\alpha(n)n}$.
\end{thm}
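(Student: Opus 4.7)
The plan is to verify balance via character sums. For every non-trivial character $\chi : \Zn \to \mathbb{C}^*$, I would show that
\[
S(\chi) := \sum_{i=1}^{m} \sum_{j=1}^{m-i+1} \chi(\Delta X(i, j))
\]
vanishes; this, combined with the divisibility $n \mid \binom{m+1}{2}$ (which follows from the hypothesis via Theorem~\ref{thm6}, since $n \mid \alpha(n) n$), forces the multiplicity function $\m_{\Delta X}$ to be constant, i.e., $X$ to be balanced.

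By Proposition~\ref{prop5}, each row $i$ of $\Delta X$ is an arithmetic progression with starting value $b_i = 2^{i-1}a + 2^{i-2}(i-1)d$ and common difference $c_i = 2^{i-1}d$; the formula extends to $i = 1$ because $n$ odd makes $2$ invertible. Since $n$ is odd and $d$ is a unit, each of $c_i$, $2d$, $2d^{-1}$ is a unit of $\Zn$, so $\chi(c_i), \chi(2d), \chi(2d^{-1})$ are all distinct from $1$ whenever $\chi$ is non-trivial. The inner sum of $\chi$ over row $i$ is then a non-degenerate geometric series evaluating to $\chi(b_i) (\chi(c_i)^{m-i+1} - 1)/(\chi(c_i) - 1)$. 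Substituting $\chi(c_i) = \chi(d)^{2^{i-1}}$ and $\chi(b_i) = \chi(a)^{2^{i-1}} \chi(d)^{2^{i-2}(i-1)}$, and summing over $i$, the result regroups into a combination of two geometric series in $i$ with common ratios $\chi(2d)$ and $\chi(2d^{-1})$, yielding a closed-form rational expression for $S(\chi)$ in the four quantities $\chi(d)^m, \chi(d)^{m+1}, \chi(2)^m, \chi(2)^{m+1}$.

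The finish uses the identity $\alpha(n) n = \lcm(n, \mathrm{ord}_n(2))$, immediate from the definition of $\alpha(n)$. Since $\alpha(n) n$ is divisible by both $n$ and $\mathrm{ord}_n(2)$, under $m \equiv 0 \pmod{\alpha(n)n}$ we obtain $\chi(d)^m = \chi(2)^m = 1$, while under $m \equiv -1 \pmod{\alpha(n)n}$ the same identities hold with $m+1$ in place of $m$. In either case, the geometric numerators in the closed form of $S(\chi)$ collapse, and a short verification shows that the remaining terms cancel to give $S(\chi) = 0$.

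The main obstacle will be the algebraic bookkeeping in the regrouping step, where the exponents $2^{i-1}$ and $2^{i-2}(i-1)$ interact in a nontrivial way. A more structural alternative, in the combinatorial spirit of the preceding sections, would be to observe first that the triple $(b_i, c_i, s_i)$, with $s_i := (m-i+1) \bmod n$, is periodic in $i$ of period $\alpha(n) n$, thereby reducing the theorem to the two base cases $m = \alpha(n)n$ and $m = \alpha(n)n - 1$. Each base case could then be verified by counting, for every $r \in \Zn$, the number of rows $i$ for which $r$ belongs to the excess set $\{b_i + k c_i : 0 \le k < s_i\}$, and checking this count is independent of $r$.
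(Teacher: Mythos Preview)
Your character-sum strategy is appealing, but the key step---``the result regroups into a combination of two geometric series in $i$ with common ratios $\chi(2d)$ and $\chi(2d^{-1})$''---does not hold, and this is not a matter of bookkeeping. With $\chi$ an additive character, Proposition~\ref{prop5} gives $\chi(c_i)=\chi(d)^{2^{i-1}}$ and $\chi(b_i)=\chi(a)^{2^{i-1}}\chi(d)^{2^{\,i-2}(i-1)}$, so the recursions are \emph{quadratic}: $\chi(c_{i+1})=\chi(c_i)^2$, $\chi(b_{i+1})=\chi(b_i)^2\chi(c_i)$, and likewise for the end-of-row values. Consequently the row contributions
\[
R_i=\frac{\chi(b_i)\,\chi(c_i)^{\,m-i+1}-\chi(b_i)}{\chi(c_i)-1}
\]
do not form, nor split into, geometric progressions in $i$; there is no closed rational expression for $S(\chi)$ in the four quantities $\chi(d)^m,\chi(d)^{m+1},\chi(2)^m,\chi(2)^{m+1}$ alone. (A quick sanity check at $n=3$, $a=0$, $d=1$, $m=6$ gives row sums $0,\,-\omega^2,\,\omega,\,0,\,-\omega,\,\omega^2$, which sum to $0$ but are visibly not two geometric progressions.) The identity $\alpha(n)n=\lcm(n,\mathrm{ord}_n(2))$ is correct, and it does make the $(b_i,c_i)$-data periodic of period $\alpha(n)n$, but that periodicity is exactly the content of Lemma~\ref{lemma1} and by itself does not yield a closed form for $S(\chi)$.

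Your ``structural alternative'' is closer in spirit to what the paper actually does, but it is far too thin as stated. The periodicity reduction you describe is Lemma~\ref{lemma1}; the real difficulty is the base case $m=\alpha(n)n$, and ``counting, for every $r\in\Zn$, the number of rows $i$ for which $r$ belongs to the excess set'' is precisely the hard combinatorics you have not supplied. The paper handles this by induction on $n$: it passes to a carefully chosen multiple $m=\lambda\alpha(p)n$ (with $p$ the largest prime factor of $n$ and $\lambda=\varphi(\rad(n)/p)$), decomposes $\Delta X$ into parallelograms $C_u$, $B_{(s,t)}$ and subtriangles $A_r$, shows each $C_u$ is perfectly balanced, pairs up the $B_{(s,t)}$ via a fixed-point-free involution $\Psi(s,t)=(\psi(s,t),p-t)$ so that each pair is balanced, and finally shows the $A_r$ contribute a function constant on cosets of $(n/p)\Zn$, which combined with the inductive hypothesis on $\Z/(n/p)\Z$ and Theorem~\ref{thm2} finishes the argument. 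None of this structure is visible in your sketch, and there is no shortcut through character sums that avoids it.
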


This theorem will be proved at the end of this section.

The positive integer $\alpha(n)$ seems to be difficult to determine. Indeed, there is no general formula known to compute the multiplicative order of an integer modulo $n$ but, however, we get the following helpful propositions.

For every positive integer $n$, \textit{the radical of $n$}, denoted by $\rad(n)$, is the product of the distinct prime factors of $n$, that is,
$$
\rad(n) = \prod_{\stackrel{p\in\mathcal{P}}{p|n}}p.
$$
The radical of $n$ is also the largest square-free divisor of $n$.

\begin{prop}\label{prop2}
Let $n$ be an odd number. Then $\alpha(n)$ divides $\alpha(\rad(n))$.
\end{prop}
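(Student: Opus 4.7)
The plan is to unpack what ``$\alpha(n) \mid \alpha(r)$'' really says and reduce to a prime-power calculation, where the structure of the multiplicative group $(\Z/p^a\Z)^*$ gives us the control we need. Write $r = \rad(n)$ and $e = \alpha(r)$, so by definition $2^{er} \equiv 1 \pmod r$. First I would observe that the set $\{e' \in \N^* : 2^{e'n} \equiv 1 \pmod n\}$ consists precisely of the positive multiples of $\alpha(n)$: indeed, letting $k = \mathrm{ord}_n(2)$, the condition becomes $k \mid e'n$, whose solution set is $(k/\gcd(k,n))\N^*$, and the minimal element is by definition $\alpha(n)$. Hence the divisibility $\alpha(n) \mid \alpha(r)$ is equivalent to the single congruence $2^{en} \equiv 1 \pmod n$ with $e = \alpha(r)$.

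Next, by the Chinese remainder theorem, it suffices to prove $2^{en} \equiv 1 \pmod{p^{a_p}}$ for every prime $p \mid n$, where $a_p = v_p(n)$. Set $d_p = \mathrm{ord}_p(2)$; since $p \mid r$ and $2^{er} \equiv 1 \pmod p$, we get $d_p \mid er$. Now invoke the standard structure result for odd prime powers: the multiplicative order of $2$ modulo $p^{a_p}$ divides $d_p \cdot p^{a_p - 1}$. So it is enough to show that $d_p \cdot p^{a_p-1} \mid en$.

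The divisibility $d_p \cdot p^{a_p - 1} \mid en$ is then a routine $q$-adic valuation check. The $p$-adic side is immediate because $d_p \mid p-1$ forces $v_p(d_p) = 0$, while $v_p(en) \geq a_p > a_p - 1$. For any other prime $q$, the key input is $d_p \mid er$: this gives $v_q(d_p) \leq v_q(e) + v_q(r)$, and $v_q(r) \leq 1 \leq a_q$ whenever $q \mid n$, while $v_q(r) = 0 = v_q(n)$ otherwise, so in all cases $v_q(d_p) \leq v_q(en)$. Chaining these bounds yields $2^{en} \equiv 1 \pmod n$ and hence $\alpha(n) \mid \alpha(r)$.

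I expect the only step that might be perceived as an obstacle is justifying that $\mathrm{ord}_{p^{a}}(2)$ divides $\mathrm{ord}_p(2) \cdot p^{a-1}$; this is a classical fact about the cyclic structure of $(\Z/p^a\Z)^*$ for odd $p$, and can be quoted directly. Everything else is bookkeeping with $p$-adic valuations, made clean by the reformulation of $\alpha(n)$ in terms of $\mathrm{ord}_n(2)/\gcd(\mathrm{ord}_n(2),n)$.
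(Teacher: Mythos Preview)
Your argument is correct. The key reformulation $\alpha(n)=\mathrm{ord}_n(2)/\gcd(\mathrm{ord}_n(2),n)$ is valid, the valuation bookkeeping in step~4 checks out prime by prime, and the structural fact $\mathrm{ord}_{p^{a}}(2)\mid \mathrm{ord}_p(2)\cdot p^{a-1}$ is indeed a standard consequence of the cyclicity of $(\Z/p^a\Z)^*$ for odd $p$.

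The paper, however, proceeds differently. Rather than decomposing $n$ via the Chinese remainder theorem and invoking the structure of $(\Z/p^a\Z)^*$, it runs a direct descent: for any prime $p$ with $p^2\mid n$, it shows $\alpha(n)\mid\alpha(n/p)$ by writing $2^{\alpha(n/p)\cdot n/p}=1+u\,n/p$ and expanding $(1+u\,n/p)^p$ with the binomial theorem, observing that every non-constant term is divisible by $n$. Iterating this step strips away all repeated prime factors and lands on $\rad(n)$. So the paper essentially \emph{reproves} the lifting-the-exponent ingredient inline, while you package it as a quotable lemma and then do a single global divisibility check via valuations. Your route is more structural and yields the useful byproduct $\alpha(n)=\mathrm{ord}_n(2)/\gcd(\mathrm{ord}_n(2),n)$; the paper's route is entirely self-contained and avoids any appeal to the cyclic-group structure of $(\Z/p^a\Z)^*$.
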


\begin{proof}
Let $p$ be a prime factor of $n$ such that $p^2$ divides $n$. We shall show that $\alpha(n)$ divides $\alpha(\frac{n}{p})$. There exists a positive integer $u$ such that
$$
2^{\alpha(\frac{n}{p})\frac{n}{p}} = 1+u\frac{n}{p}.
$$
It follows from the binomial theorem that
$$
2^{\alpha(\frac{n}{p})n} = {\left(2^{\alpha(\frac{n}{p})\frac{n}{p}}\right)}^{p} = {\left(1+u\frac{n}{p}\right)}^{p} = 1+\sum_{k=1}^{p-1}{\binom{p}{k}u^k\left(\frac{n}{p}\right)^k} + u^p\left(\frac{n}{p}\right)^p \equiv 1 \pmod{n},
$$
and so $\alpha(n)$ divides $\alpha(\frac{n}{p})$. We conclude by induction that $\alpha(n)$ divides $\alpha(\rad(n))$.
\end{proof}

\begin{prop}\label{prop8}
Let $p$ be an odd prime number. Then,
$$
\alpha(p^k)=\alpha(p),
$$
for every positive integer $k$.
\end{prop}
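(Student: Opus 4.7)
The plan is to establish the two divisibilities $\alpha(p^k)\mid\alpha(p)$ and $\alpha(p)\mid\alpha(p^k)$ separately; the desired equality follows immediately.

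The first divisibility is an immediate consequence of Proposition \ref{prop2}: since $\rad(p^k)=p$, we obtain $\alpha(p^k)\mid\alpha(\rad(p^k))=\alpha(p)$.

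For the reverse direction, the key observation is that $\alpha(p)$ coincides with the ordinary multiplicative order of $2$ modulo $p$. Indeed, writing $d=\mathrm{ord}_p(2)$, Fermat's little theorem gives $d\mid p-1$, so in particular $\gcd(d,p)=1$. Hence the condition $2^{ep}\equiv 1\pmod{p}$ is equivalent to $d\mid ep$, which by coprimality of $d$ and $p$ reduces to $d\mid e$; the smallest such $e$ is $d$, so $\alpha(p)=d$. Now reducing the defining congruence $2^{\alpha(p^k)\,p^k}\equiv 1\pmod{p^k}$ modulo $p$ yields $d\mid\alpha(p^k)\,p^k$, and coprimality of $d$ with $p$ forces $d\mid\alpha(p^k)$, i.e.\ $\alpha(p)\mid\alpha(p^k)$.

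I anticipate no serious obstacle: the nontrivial lifting step from modulus $p$ to modulus $p^k$ is already encapsulated in Proposition \ref{prop2}, whose binomial argument does the heavy work. The remaining direction reduces, via the elementary identity $\alpha(p)=\mathrm{ord}_p(2)$, to a one-line reduction modulo $p$ using $\gcd(\mathrm{ord}_p(2),p)=1$.
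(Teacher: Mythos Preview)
Your proof is correct and follows essentially the same two-divisibility approach as the paper, using Proposition~\ref{prop2} for $\alpha(p^k)\mid\alpha(p)$ and a reduction modulo $p$ together with Fermat's little theorem for the reverse. The only cosmetic difference is that you route the second step through the identity $\alpha(p)=\mathrm{ord}_p(2)$, whereas the paper instead iterates Fermat to replace $2^{\alpha(p^k)p^k}$ by $2^{\alpha(p^k)p}$ modulo $p$ and concludes directly from the definition of $\alpha(p)$.
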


\begin{proof}
By Proposition \ref{prop2}, the integer $\alpha(p^k)$ divides $\alpha(p)$. It remains to prove that $\alpha(p)$ divides $\alpha(p^k)$. The congruence
$$
2^{\alpha(p^k)p^k} \equiv 1 \pmod{p^k}
$$
implies that
$$
2^{\alpha(p^k)p^k} \equiv 1 \pmod{p},
$$
and hence, by Fermat's little theorem, it follows that
$$
2^{\alpha(p^k)p} \equiv 2^{\alpha(p^k)p^k} \equiv 1 \pmod{p}.
$$
Therefore $\alpha(p)$ divides $\alpha(p^k)$. This completes the proof.
\end{proof}

\begin{prop}\label{prop3}
Let $n_1$ and $n_2$ be two relatively prime odd numbers. Then, $\alpha(n_1n_2)$ divides $\lcm(\alpha(n_1),\alpha(n_2))$.
\end{prop}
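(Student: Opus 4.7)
The plan is to set $N=n_1n_2$ and $L=\lcm(\alpha(n_1),\alpha(n_2))$ and show directly that $2^{LN}\equiv 1\pmod N$. Once this is established, divisibility of $L$ by $\alpha(N)$ follows from the minimality in the definition of $\alpha(N)$, provided one first checks the standard equivalence
$$
2^{eN}\equiv 1\pmod{N}\ \Longleftrightarrow\ \alpha(N)\mid e.
$$
This equivalence is the only general tool I need beyond the definition of $\alpha$: writing $o$ for the multiplicative order of $2$ modulo $N$, the condition $2^{eN}\equiv1\pmod N$ amounts to $o\mid eN$, i.e.\ $o/\gcd(o,N)\mid e$, and the least positive such $e$ is by definition $\alpha(N)$.

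To show $2^{LN}\equiv 1\pmod N$, I would exploit the coprimality hypothesis via the Chinese remainder theorem: it is enough to verify the congruence modulo $n_1$ and modulo $n_2$ separately. For the modulus $n_1$, the definition gives $2^{\alpha(n_1)n_1}\equiv1\pmod{n_1}$, and since $\alpha(n_1)\mid L$ by construction of the lcm, the exponent $LN=Ln_1n_2$ is a multiple of $\alpha(n_1)n_1$, whence $2^{LN}\equiv 1\pmod{n_1}$. The argument for $n_2$ is symmetric. Combining via CRT yields $2^{LN}\equiv 1\pmod{N}$, and then the equivalence above gives $\alpha(N)\mid L$, which is exactly the claim.

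The proof is essentially a bookkeeping argument: the only non-trivial step is recognising the equivalence between $2^{eN}\equiv 1\pmod N$ and $\alpha(N)\mid e$, but this is immediate from the definition of multiplicative order. I do not expect any genuine obstacle; the coprimality of $n_1$ and $n_2$ is used only to apply CRT, and oddness is inherited so that $2$ remains a unit modulo each $n_i$ (and hence modulo $N$), making all the orders well-defined.
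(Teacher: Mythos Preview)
Your proof is correct and follows essentially the same approach as the paper: both argue that $2^{\lcm(\alpha(n_1),\alpha(n_2))\,n_1n_2}\equiv 1\pmod{n_i}$ for $i=1,2$ from the definition of $\alpha(n_i)$, then invoke the Chinese remainder theorem. Your additional justification of the equivalence $2^{eN}\equiv 1\pmod{N}\Leftrightarrow \alpha(N)\mid e$ is a detail the paper leaves implicit.
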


\begin{proof}
Let $i\in\{1,2\}$. The congruences
$$
2^{\alpha(n_i)n_i}\equiv1\pmod{n_i}
$$
imply that
$$
2^{\lcm(\alpha(n_1),\alpha(n_2))n_1n_2}\equiv1\pmod{n_i}.
$$
The result follows by the Chinese remainder theorem.
\end{proof}

For example, for $n_1=5$ and $n_2=3$, we have the equality $\alpha(15)=4=\lcm(4,2)=\lcm(\alpha(5),\alpha(3))$. However, $\alpha(n_1n_2)$ may be a strict factor of $\lcm(\alpha(n_1),\alpha(n_2))$, e.g. for $n=21$: $\alpha(21)=2$ and $\lcm(\alpha(7),\alpha(3))=\lcm(3,2)=6$. The table in Figure~\ref{fig3} shows the first values of $\alpha(n)$ for $n$ odd.

\begin{figure}[!h]
\centering
\begin{tabular}{cccc}
\begin{tabular}{|c|c|c|}
\hline
$n$ & $\rad(n)$ & $\alpha(n)$\\
\hline
\hline
$1$ & $1$ & $1$\\
\hline
$3$ & $3$ & $2$\\
\hline
$5$ & $5$ & $4$\\
\hline
$7$ & $7$ & $3$\\
\hline
$9$ & $3$ & $2$\\
\hline
$11$ & $11$ & $10$\\
\hline
$13$ & $13$ & $12$\\
\hline
$15$ & $5\cdot3$ & $4$\\
\hline
$17$ & $17$ & $8$\\
\hline
$19$ & $19$ & $18$\\
\hline
$21$ & $7\cdot3$ & $2$\\
\hline
$23$ & $23$ & $11$\\
\hline
$25$ & $5$ & $4$\\
\hline
\end{tabular} &
\begin{tabular}{|c|c|c|}
\hline
$n$ & $\rad(n)$ & $\alpha(n)$\\
\hline
\hline
$27$ & $3$ & $2$\\
\hline
$29$ & $29$ & $28$\\
\hline
$31$ & $31$ & $5$\\
\hline
$33$ & $11\cdot3$ & $10$\\
\hline
$35$ & $7\cdot5$ & $12$\\
\hline
$37$ & $37$ & $36$\\
\hline
$39$ & $13\cdot3$ & $4$\\
\hline
$41$ & $41$ & $20$\\
\hline
$43$ & $43$ & $14$\\
\hline
$45$ & $5\cdot3$ & $4$\\
\hline
$47$ & $47$ & $23$\\
\hline
$49$ & $7$ & $3$\\
\hline
$51$ & $17\cdot3$ & $8$\\
\hline
\end{tabular} &
\begin{tabular}{|c|c|c|}
\hline
$n$ & $\rad(n)$ & $\alpha(n)$\\
\hline
\hline
$53$ & $53$ & $52$\\
\hline
$55$ & $11\cdot5$ & $4$\\
\hline
$57$ & $19\cdot3$ & $6$\\
\hline
$59$ & $59$ & $58$\\
\hline
$61$ & $61$ & $60$\\
\hline
$63$ & $7\cdot3$ & $2$\\
\hline
$65$ & $13\cdot5$ & $12$\\
\hline
$67$ & $67$ & $66$\\
\hline
$69$ & $23\cdot3$ & $22$\\
\hline
$71$ & $71$ & $35$\\
\hline
$73$ & $73$ & $9$\\
\hline
$75$ & $5\cdot3$ & $4$\\
\hline
$77$ & $11\cdot7$ & $30$\\
\hline
\end{tabular} &
\begin{tabular}{|c|c|c|}
\hline
$n$ & $\rad(n)$ & $\alpha(n)$\\
\hline
\hline
$79$ & $79$ & $39$\\
\hline
$81$ & $3$ & $2$\\
\hline
$83$ & $83$ & $82$\\
\hline
$85$ & $17\cdot5$ & $8$\\
\hline
$87$ & $29\cdot3$ & $28$\\
\hline
$89$ & $89$ & $11$\\
\hline
$91$ & $13\cdot7$ & $12$\\
\hline
$93$ & $31\cdot3$ & $10$\\
\hline
$95$ & $19\cdot5$ & $36$\\
\hline
$97$ & $97$ & $48$\\
\hline
$99$ & $11\cdot3$ & $10$\\
\hline
$101$ & $101$ & $100$\\
\hline
$103$ & $103$ & $51$\\
\hline
\end{tabular}
\end{tabular}
\caption{\label{fig3}The first values of $\alpha(n)$ for $n$ odd}
\end{figure}

We end this section by proving Theorem \ref{mainthm1}, using the following two lemmas.

\begin{lem}\label{lemma3}
Let $n$ be a positive integer. Let $AP(a,d,m)=(x_1,x_2,\ldots,x_m)$ be an arithmetic progression beginning with $a\in\Zn$ and with invertible common difference $d\in\Zn$. Then, every $n$ consecutive terms of $AP(a,d,m)$ are distinct. In other words, for every $1\leq i\leq m-n+1$, we have
$$
\left\{x_i,x_{i+1},\ldots,x_{i+n-1}\right\}=\Zn.
$$
\end{lem}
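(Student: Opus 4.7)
The plan is to argue directly from the explicit formula for the terms of the arithmetic progression. By definition, $x_j = a + (j-1)d$ for $1 \le j \le m$, so to establish that $\{x_i, x_{i+1}, \ldots, x_{i+n-1}\} = \Zn$ it suffices, by a counting argument (we have $n$ elements in a set of size $n$), to show that these $n$ consecutive terms are pairwise distinct.

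Suppose for contradiction that $x_j = x_k$ for some indices $i \le j < k \le i+n-1$. Substituting the explicit formula gives
$$
a + (j-1)d = a + (k-1)d \quad \text{in } \Zn,
$$
which simplifies to $(k-j)d = 0$ in $\Zn$. Since $d$ is invertible in $\Zn$, multiplying by $d^{-1}$ yields $k - j \equiv 0 \pmod{n}$. However, the constraints $i \le j < k \le i+n-1$ force $1 \le k - j \le n-1$, which contradicts $n \mid (k-j)$.

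Hence the $n$ terms $x_i, x_{i+1}, \ldots, x_{i+n-1}$ are pairwise distinct elements of $\Zn$, and since $|\Zn| = n$ they must exhaust $\Zn$. This proof is essentially a one-liner using invertibility of $d$; there is no genuine obstacle. The only subtlety worth noting is that invertibility of $d$ is used in an essential way: for non-invertible $d$, one could have $(k-j)d = 0$ with $0 < k - j < n$ (take $k - j = n / \gcd(n, d_0)$), so the conclusion would fail — this is consistent with the role played by invertibility in Theorem~\ref{thm3}.
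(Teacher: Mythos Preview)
Your proof is correct and follows essentially the same approach as the paper: both arguments reduce to the observation that $x_{j}=x_{k}$ if and only if $j\equiv k\pmod{n}$, using invertibility of $d$, and then conclude by counting. The paper presents this as a single chain of equivalences, while you phrase it as a contradiction argument with an added remark on why invertibility is essential, but the underlying idea is identical.
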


\begin{proof}
Since the common difference $d$ is invertible in $\Zn$, it follows that, for every positive integers $i_1$ and $i_2$, we have
$$
x_{i_1}=x_{i_2}\ \Longleftrightarrow\ a+(i_1-1)d=a+(i_2-1)d\ \Longleftrightarrow\ (i_1-1)d=(i_2-1)d\ \Longleftrightarrow\ i_1\equiv i_2\pmod{n}.
$$
This completes the proof.
\end{proof}

\begin{lem}\label{lemma1}
Let $n$ be an odd number and $k$ a positive integer. Let $a$ and $d$ be in $\Zn$ with $d$ invertible. Then, the arithmetic progression $AP(a,d,k\alpha(n)n)$ is balanced if, and only if, its initial segment $AP(a,d,\alpha(n)n)$ is also balanced.
\end{lem}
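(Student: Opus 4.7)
The plan is to decompose the Steinhaus triangle of $X' = AP(a,d,kN)$, where $N = \alpha(n)n$, into its top $N$ rows plus a similar but smaller triangle, and then to iterate this decomposition.

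First, I will apply Proposition \ref{prop4} with $i = N$ to obtain
$$
\partial^N X' = AP\bigl(2^N a + 2^{N-1} N d,\ 2^N d,\ (k-1)N\bigr).
$$
By the very definition of $\alpha(n)$, one has $2^N = 2^{\alpha(n) n} \equiv 1 \pmod n$, and since $N = \alpha(n) n$ is a multiple of $n$, the quantity $Nd$ vanishes in $\Zn$. Hence $\partial^N X' = AP(a, d, (k-1)N)$, which I will denote by $Y$. Geometrically, rows $N+1, N+2, \ldots, kN$ of $\Delta X'$ form exactly the Steinhaus triangle $\Delta Y$.

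Second, I will analyze each of the top $N$ rows. For $1 \leq i \leq N$, row $i$ of $\Delta X'$ is, by Proposition \ref{prop4}, the arithmetic progression of length $kN - i + 1$ with common difference $2^{i-1} d$, which is invertible since $n$ is odd and $d$ is invertible. The corresponding row $i$ of $\Delta X$ with $X = AP(a,d,N)$ is the initial segment of length $N - i + 1$ of this same arithmetic progression. The remaining
$$
(kN - i + 1) - (N - i + 1) = (k-1)N = (k-1)\alpha(n)\, n
$$
consecutive terms split into $(k-1)\alpha(n)$ consecutive blocks of $n$ terms each, and by Lemma \ref{lemma3} each such block equals $\Zn$ as a set. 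Consequently
$$
\m_{\partial^{i-1}X'}(x) = \m_{\partial^{i-1}X}(x) + (k-1)\alpha(n), \qquad \forall x \in \Zn.
$$

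Third, summing this identity over $i = 1, \ldots, N$ and adding the contribution $\m_{\Delta Y}$ coming from the bottom rows yields
$$
\m_{\Delta X'}(x) = \m_{\Delta X}(x) + N(k-1)\alpha(n) + \m_{\Delta Y}(x), \qquad \forall x \in \Zn.
$$
I will then induct on $k$, using the base case $k=1$ and the identity $(k-1) + \binom{k-1}{2} = \binom{k}{2}$, to obtain the closed form
$$
\m_{\Delta X'}(x) = k\cdot \m_{\Delta X}(x) + \alpha(n) N \binom{k}{2}, \qquad \forall x \in \Zn.
$$
Since the additive term is independent of $x$ and $k \geq 1$, the function $\m_{\Delta X'}$ is constant on $\Zn$ if and only if $\m_{\Delta X}$ is, which is precisely the claim. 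The one nontrivial verification is the self-similarity identity $\partial^N X' = AP(a,d,(k-1)N)$ in Step 1; everything afterwards is routine multiplicity bookkeeping made possible by Lemma \ref{lemma3}.
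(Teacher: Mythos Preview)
Your proof is correct and follows essentially the same approach as the paper: the paper decomposes $\Delta X'$ into a top-left triangle $A=\Delta AP(a,d,\alpha(n)n)$, a bottom triangle $B=\Delta\partial^{\alpha(n)n}X'=\Delta AP(a,d,(k-1)\alpha(n)n)$, and a parallelogram $C$ with constant multiplicity function, leading to the same recursion and the same closed form $\m_{\Delta X'}(x)=k\cdot\m_{\Delta X}(x)+\binom{k}{2}\alpha(n)^2 n$. Your row-by-row accounting for the top $N$ rows is just the union $A\cup C$ described differently, and your key self-similarity identity $\partial^{N}X'=AP(a,d,(k-1)N)$ is exactly the paper's computation of $B$.
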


\begin{proof}
We shall show that there exists a relationship between the multiplicity function of the Steinhaus triangle $\Delta AP(a,d,k\alpha(n)n)$ and that of $\Delta AP(a,d,\alpha(n)n)$. We set
$$
X=AP(a,d,k\alpha(n)n).
$$
We now consider the structure of the Steinhaus triangle $\Delta X$ depicted in Figure~\ref{fig4}. Recall that $\Delta X(i,j)$ denotes the $j$th element of the $i$th row of $\Delta X$, for every integer $1\leq i\leq k\alpha(n)n$ and every integer $1\leq j\leq k\alpha(n)n-i+1$.
\begin{figure}[!h]
\centering
\begin{pspicture}(4,3.964)
\pspolygon(0,3.464)(4,3.464)(2,0)
\psline(1,3.464)(0.5,2.598)(3.5,2.598)
\psline{<->}(0,3.664)(1,3.664)
\rput(0.5,3.864){\tiny{$\alpha(n)n$}}
\psline{<->}(1,3.664)(4,3.664)
\rput(2.5,3.864){\tiny{$(k-1)\alpha(n)n$}}
\psline{<->}(4.2,3.464)(4.2,2.598)
\rput(4.7,3.031){\tiny{$\alpha(n)n$}}
\psline{<->}(4.2,2.598)(4.2,0)
\rput(5.1,1.299){\tiny{$(k-1)\alpha(n)n$}}
\rput(0.5,3.175){$A$}
\rput(2.25,3.031){$C$}
\rput(2,1.732){$B$}
\end{pspicture}
\caption{\label{fig4}Structure of $\Delta X$}
\end{figure}
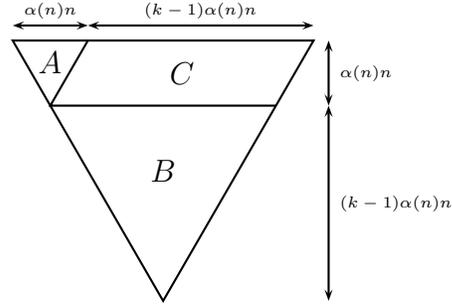
The subtriangle $A$ is defined by
$$
A = \left\{ \Delta X(i,j)\ \middle|\ 1\leq i\leq \alpha(n)n\ ,\ 1\leq j\leq \alpha(n)n-i+1 \right\}.
$$
Then $A$ is the Steinhaus triangle generated by the initial segment $AP(a,d,\alpha(n)n)$ of the sequence $X$, that is,
$$
A = \Delta AP(a,d,\alpha(n)n).
$$
The subtriangle $B$ is defined by
$$
B = \left\{ \Delta X(i,j)\ \middle|\ \alpha(n)n+1\leq i\leq k\alpha(n)n\ ,\ 1\leq j\leq k\alpha(n)n-i+1 \right\}.
$$
Then $B$ is the Steinhaus triangle generated by the derived sequence $\partial^{\alpha(n)n}X$, that is,
$$
B = \Delta\partial^{\alpha(n)n}X.
$$
Applying Proposition \ref{prop4}, we obtain that
$$
\partial^{\alpha(n)n}AP\left(a,d,k\alpha(n)n\right) = AP\left(2^{\alpha(n)n}a+2^{\alpha(n)n-1}\alpha(n)nd,2^{\alpha(n)n}d,(k-1)\alpha(n)n\right).
$$
Since $2^{\alpha(n)n}=1$, it immediately follows that
$$
B = \Delta AP(a,d,(k-1)\alpha(n)n).
$$
Finally, the multiset $C$ is defined by
$$
C = \left\{ \Delta X(i,j)\ \middle|\ 1\leq i\leq \alpha(n)n\ ,\ \alpha(n)n-i+2\leq j\leq k\alpha(n)n-i+1 \right\}.
$$
Then each row of $C$ is composed of $(k-1)\alpha(n)n$ consecutive terms of a derived sequence of $X$. Since, for every $0\leq i\leq k\alpha(n)n-1$, the derived sequence $\partial^iX$ of $X$ is an arithmetic progression with invertible common difference $2^{i}d$ by Proposition \ref{prop4}, it follows from Lemma \ref{lemma3} that each element of $\Zn$ occurs $(k-1)\alpha(n)$ times in each row of $C$. Therefore, the multiplicity function of $C$ is the constant function defined by
$$
\m_C(x) = (k-1){\alpha(n)}^2n,\ \forall x\in\Zn.
$$
Combining these results on the multisets $A$, $B$ and $C$, we have
$$
\m_{\Delta AP(a,d,k\alpha(n)n)}(x)
\begin{array}[t]{l}
= \m_A(x)+\m_B(x)+\m_C(x) \\
= \m_{\Delta AP(a,d,\alpha(n)n)}(x) + \m_{\Delta AP(a,d,(k-1)\alpha_nn)}(x) + (k-1){\alpha(n)}^2n,
\end{array}
$$
for every $x$ in $\Zn$. Thus, by induction on $k$, we obtain
$$
\m_{\Delta AP(a,d,k\alpha(n)n)}(x)=k\cdot\m_{\Delta AP(a,d,\alpha(n)n)}(x)+\binom{k}{2}{\alpha(n)}^2n,\ \forall x\in\Zn.
$$
This completes the proof.
\end{proof}

We are now ready to prove our main theorem.

\begin{proof}[Proof of Theorem \ref{mainthm1}]\ 
\par\textbf{1st Case: $\mathbf{m\equiv-1\pmod{\alpha(n)n}}$.}\\
We first derive the case $m\equiv -1 \pmod{\alpha(n)n}$ from the case $m\equiv 0\pmod{\alpha(n)n}$. Let $k$ be a positive integer and
$$
X=AP\left(a,d,k\alpha(n)n-1\right).
$$
By Proposition \ref{prop1}, the arithmetic progression
$$
Y=AP\left(2^{-1}a-2^{-2}d,2^{-1}d,k\alpha(n)n\right)
$$
is a primitive sequence of $X$. Since $Y$ is an arithmetic progression with invertible common difference $2^{-1}d$ and of length $k\alpha(n)n$, it follows from Lemma \ref{lemma3} that each element of $\Zn$ occurs $k\alpha(n)$ times in the sequence $Y$. Since $X$ is the derived sequence of $Y$, we have
$$
\m_{\Delta X}(x) = \m_{\Delta\partial Y}(x) = \m_{\Delta Y}(x) - \m_Y(x) = \m_{\Delta Y}(x) - k\alpha(n),
$$
for all $x$ in $\Zn$. Therefore, $X$ is balanced if and only if the sequence $Y$ is balanced. This completes the proof of the case $m\equiv -1\pmod{\alpha(n)n}$ from the case $m\equiv0\pmod{\alpha(n)n}$.

\par\textbf{2nd Case: $\mathbf{m\equiv0\pmod{\alpha(n)n}}$.}\\
We shall prove this case by induction on $n$. For $n=1$, it is clear that all finite sequences in $\Zn=\{0\}$ are balanced and so, the assertion is true for $n=1$. Let now $n>1$ be a positive integer and $p$ be the greatest prime factor of $n$. Suppose that the statement is true for $q=\frac{n}{p}$, i.e. every arithmetic progression with invertible common difference and of length $m\equiv0\pmod{\alpha(q)q}$ in $\Z/q\Z$ is balanced. Let $a$ and $d$ be in $\Zn$ with $d$ invertible. We will show that $AP(a,d,m)$ is balanced for every positive integer $m\equiv0\pmod{\alpha(n)n}$. By Lemma \ref{lemma1}, it is sufficient to prove that $AP(a,d,m)$ is balanced for one length $m$ multiple of $\alpha(n)n$.

We set
$$
\lambda=\varphi\left(\frac{\rad(n)}{p}\right).
$$
Then the integer $\lambda\alpha(p)$ is a multiple of $\alpha(n)$. Indeed, the integer $\alpha(n)$ divides $\alpha(\rad(n))$ by Proposition~\ref{prop2}, which divides $\alpha\left(\frac{\rad(n)}{p}\right)\alpha(p)$ by Proposition~\ref{prop3}, which divides $\varphi\left(\frac{\rad(n)}{p}\right)\alpha(p)$ by definition of the function $\alpha$.

We will prove that the sequence $X=AP(a,d,\lambda\alpha(p)n)$ is balanced. We begin by showing that the multiplicity function of $\Delta X$ is constant on each coset of the subgroup $q\Zn$. We consider the structure of the Steinhaus triangle $\Delta X$ depicted in Figure~\ref{fig5} where $\Delta X$ is constituted by the multisets $A_r$, $B_{(s,t)}$ and $C_u$. We shall show that $\mathbf{(1)}$ the multiplicity function $\m_{C_u}$ is constant for each $C_u$, $\mathbf{(2)}$ the multiplicity function of the union of the $B_{(s,t)}$ is constant, and $\mathbf{(3)}$ the multiplicity function of the union of the $A_r$ is constant on each coset of the subgroup $q\Zn$.

\begin{figure}[!p]
\centering
\begin{pspicture}(16.2,14.0296)

\pspolygon(0,14.0296)(16.2,14.0296)(13.5,9.3531)(2.7,9.3531)

\psline(2.7,14.0296)(1.35,11.6913)(14.85,11.6913)
\psline(4.05,11.6913)(2.7,9.3531)

\psline[linestyle=dashed](5.4,9.3531)(4.05,7.0148)(12.15,7.0148)
\psline[linestyle=dashed](6.75,7.0148)(5.4,4.6765)
\psline[linestyle=dashed](2.7,9.3531)(5.4,4.6765)
\psline[linestyle=dashed](13.5,9.3531)(10.8,4.6765)

\pspolygon(5.4,4.6765)(10.8,4.6765)(8.1,0)
\psline(8.1,4.6765)(6.75,2.3383)(9.45,2.3383)

\rput(8.775,12.8605){$C_1$}
\rput(8.775,10.5222){$C_2$}
\rput(8.775,3.5074){$C_{\alpha(p)-1}$}

\psline(0.45,14.0296)(0.225,13.6399)(2.475,13.6399)
\psline(0.675,13.6399)(0.45,13.2502)(2.25,13.2502)
\psline[linestyle=dashed,linewidth=0.01,dash=0.05](0.9,13.2502)(0.675,12.8605)(2.025,12.8605)
\psline[linestyle=dashed,linewidth=0.01,dash=0.05](1.125,12.8605)(0.9,12.4708)
\psline(0.9,12.4708)(1.8,12.4708)
\psline(1.35,12.4708)(1.125,12.0811)(1.575,12.0811)

\rput(1.4625,13.8348){\tiny{$B_{(1,1)}$}}
\rput(1.4625,13.4450){\tiny{$B_{(1,2)}$}}
\rput[bl](1.575,12.0811){\tiny{$\ \ \longleftarrow B_{(1,p-1)}$}}
\rput[br](0.225,13.6399){\tiny{$A_1\longrightarrow\ \ $}}
\rput[br](0.45,13.2502){\tiny{$A_2\longrightarrow\ \ $}}
\rput[br](1.125,12.0811){\tiny{$A_{p-1}\longrightarrow\ \ $}}
\rput[br](1.35,11.6913){\tiny{$A_{p}\longrightarrow\ \ $}}

\psline(1.8,11.6913)(1.575,11.3016)(3.825,11.3016)
\psline(2.025,11.3016)(1.8,10.9119)(3.6,10.9119)
\psline[linestyle=dashed,linewidth=0.01,dash=0.05](2.25,10.9119)(2.025,10.5222)(3.375,10.5222)
\psline[linestyle=dashed,linewidth=0.01,dash=0.05](2.475,10.5222)(2.25,10.1325)
\psline(2.25,10.1325)(3.15,10.1325)
\psline(2.7,10.1325)(2.475,9.7428)(2.925,9.7428)

\rput(2.8125,11.4965){\tiny{$B_{(2,1)}$}}
\rput(2.8125,11.1068){\tiny{$B_{(2,2)}$}}
\rput[bl](2.925,9.7428){\tiny{$\ \ \longleftarrow B_{(2,p-1)}$}}
\rput[br](1.575,11.3016){\tiny{$A_{p+1}\longrightarrow\ \ $}}
\rput[br](1.8,10.9119){\tiny{$A_{p+2}\longrightarrow\ \ $}}
\rput[br](2.475,9.7428){\tiny{$A_{2p-1}\longrightarrow\ \ $}}
\rput[br](2.7,9.3531){\tiny{$A_{2p}\longrightarrow\ \ $}}

\psline[linestyle=dashed,linewidth=0.01,dash=0.05](3.15,9.3531)(2.925,8.9634)(5.175,8.9634)
\psline[linestyle=dashed,linewidth=0.01,dash=0.05](3.375,8.9634)(3.15,8.5737)(4.95,8.5737)
\psline[linestyle=dashed,linewidth=0.01,dash=0.05](3.6,8.5737)(3.375,8.1839)(4.725,8.1839)
\psline[linestyle=dashed,linewidth=0.01,dash=0.05](3.825,8.1839)(3.6,7.7942)(4.5,7.7942)
\psline[linestyle=dashed,linewidth=0.01,dash=0.05](4.05,7.7942)(3.825,7.4045)(4.275,7.4045)

\psline[linestyle=dashed,linewidth=0.01,dash=0.05](4.5,7.0148)(4.275,6.6251)(6.525,6.6251)
\psline[linestyle=dashed,linewidth=0.01,dash=0.05](4.725,6.6251)(4.5,6.2354)(6.3,6.2354)
\psline[linestyle=dashed,linewidth=0.01,dash=0.05](4.95,6.2354)(4.725,5.8457)(6.075,5.8457)
\psline[linestyle=dashed,linewidth=0.01,dash=0.05](5.175,5.8457)(4.95,5.4560)(5.85,5.4560)
\psline[linestyle=dashed,linewidth=0.01,dash=0.05](5.4,5.4560)(5.175,5.0662)(5.625,5.0662)

\psline(5.85,4.6765)(5.625,4.2868)(7.875,4.2868)
\psline(6.075,4.2868)(5.85,3.8971)(7.65,3.8971)
\psline[linestyle=dashed,linewidth=0.01,dash=0.05](6.3,3.8971)(6.075,3.5074)(7.425,3.5074)
\psline[linestyle=dashed,linewidth=0.01,dash=0.05](6.525,3.5074)(6.3,3.1177)
\psline(6.3,3.1177)(7.2,3.1177)
\psline(6.75,3.1177)(6.525,2.7280)(6.975,2.7280)

\rput(6.8625,4.4817){\tiny{$B_{(\alpha(p)-1,1)}$}}
\rput(6.8625,4.0920){\tiny{$B_{(\alpha(p)-1,2)}$}}
\rput[bl](6.975,2.7280){\tiny{$\ \ \longleftarrow B_{(\alpha(p)-1,p-1)}$}}
\rput[br](5.625,4.2868){\tiny{$A_{(\alpha(p)-2)p+1}\longrightarrow\ \ $}}
\rput[br](5.85,3.8971){\tiny{$A_{(\alpha(p)-2)p+2}\longrightarrow\ \ $}}
\rput[br](6.525,2.7280){\tiny{$A_{(\alpha(p)-1)p-1}\longrightarrow\ \ $}}
\rput[br](6.75,2.3383){\tiny{$A_{(\alpha(p)-1)p}\longrightarrow\ \ $}}

\psline(7.2,2.3383)(6.975,1.9486)(9.225,1.9486)
\psline(7.425,1.9486)(7.2,1.5588)(9,1.5588)
\psline[linestyle=dashed,linewidth=0.01,dash=0.05](7.65,1.5588)(7.425,1.1691)(8.775,1.1691)
\psline[linestyle=dashed,linewidth=0.01,dash=0.05](7.875,1.1691)(7.65,0.7794)
\psline(7.65,0.7794)(8.55,0.7794)
\psline(8.1,0.7794)(7.875,0.3897)(8.325,0.3897)

\rput(8.2125,2.1434){\tiny{$B_{(\alpha(p),1)}$}}
\rput(8.2125,1.7537){\tiny{$B_{(\alpha(p),2)}$}}
\rput[bl](8.325,0.3897){\tiny{$\ \ \longleftarrow B_{(\alpha(p),p-1)}$}}
\rput[br](6.975,1.9486){\tiny{$A_{(\alpha(p)-1)p+1}\longrightarrow\ \ $}}
\rput[br](7.2,1.5588){\tiny{$A_{(\alpha(p)-1)p+2}\longrightarrow\ \ $}}
\rput[br](7.875,0.3897){\tiny{$A_{\alpha(p)p-1}\longrightarrow\ \ $}}
\rput[br](8.1,0){\tiny{$A_{\alpha(p)p}\longrightarrow\ \ $}}

\end{pspicture}
\caption{\label{fig5}Structure of $\Delta X$}
\end{figure}
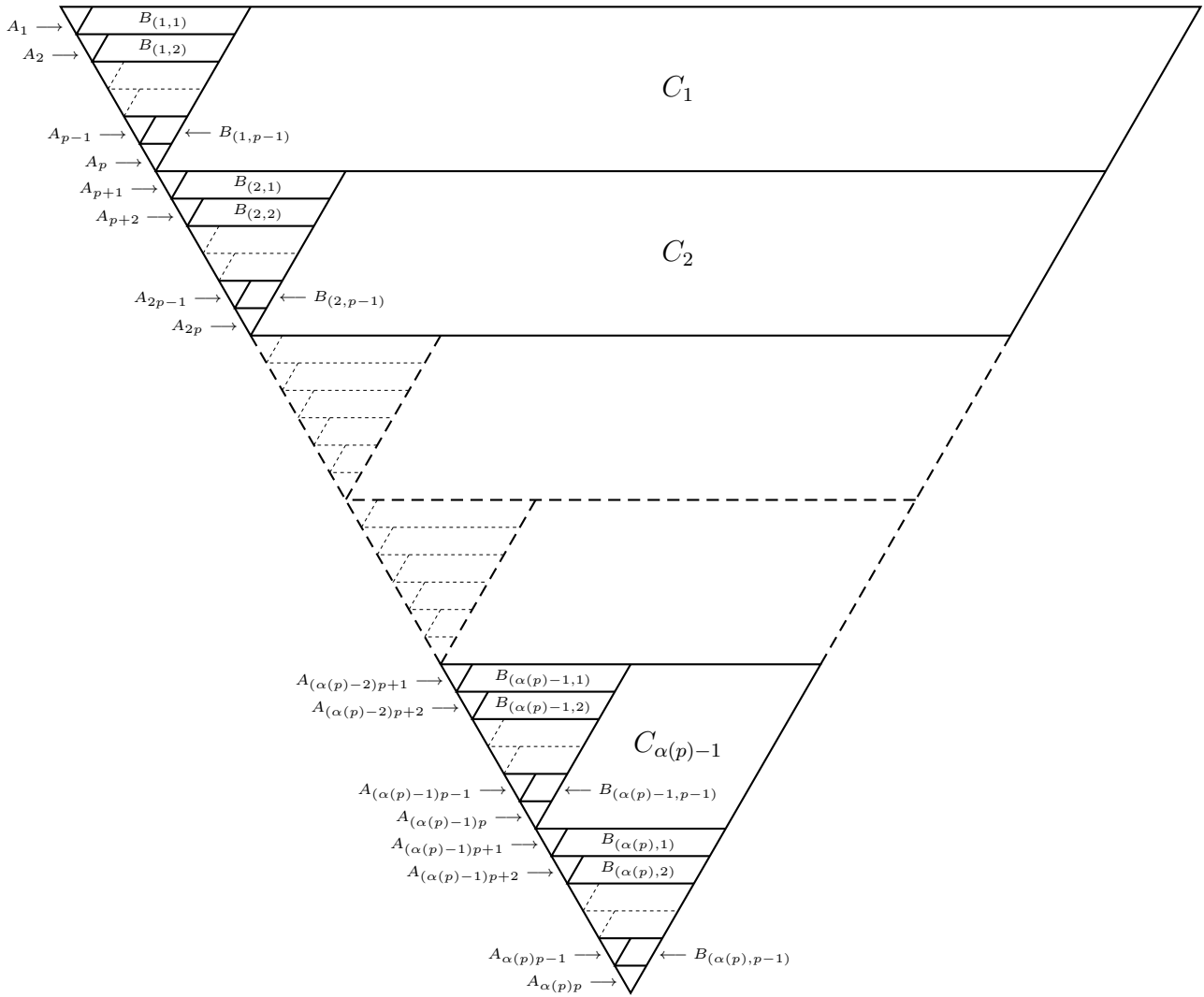

\noindent\textbf{Step (1):} The multiplicity function $\m_{C_u}$ is constant for every $1\leq u\leq\alpha(p)-1$.
\par For every integer $1\leq u\leq \alpha(p)-1$, the multiset $C_u$ is defined by
$$
C_u = \left\{ {\Delta X}(i,j)\ \middle|\ (u-1)\lambda n+1\leq i\leq u\lambda n\ ,\ u\lambda n-i+2\leq j\leq\lambda\alpha(p)n-i+1 \right\},
$$
where $\Delta X(i,j)$ denotes the $j$th element in the $i$th row of $\Delta X$, for every integer $1\leq i\leq \lambda\alpha(p)n$ and every integer $1\leq j\leq \lambda\alpha(p)n-i+1$. As depicted in Figure~\ref{fig6}, each multiset $C_u$ is a parallelogram of $\lambda n$ rows and $(\alpha(p)-u)\lambda n$ columns.

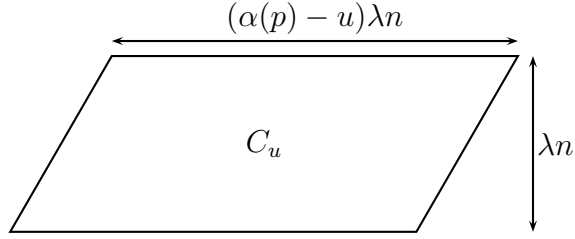
\begin{figure}[!h]
\centering
\begin{pspicture}(6.75,2.8383)
\pspolygon(0,0)(1.35,2.3383)(6.75,2.3383)(5.4,0)
\psline{<->}(1.35,2.5383)(6.75,2.5383)
\rput(4.05,2.8383){$(\alpha(p)-u)\lambda n$}
\psline{<->}(6.95,2.3383)(6.95,0)
\rput(7.25,1.1691){$\lambda n$}
\rput(3.375,1.1691){$C_u$}
\end{pspicture}
\caption{\label{fig6}Structure of $C_u$}
\end{figure}

Let $1\leq u\leq \alpha(p)-1$. Each row of $C_u$ is composed of $(\alpha(p)-u)\lambda n$ consecutive terms of a derived sequence of $X$. For every $0\leq i\leq\lambda\alpha(p)n-1$, the derived sequence $\partial^{i}X$ of $X$ is an arithmetic progression with invertible common difference $2^{i}d$ by Proposition \ref{prop4}. It follows from Lemma \ref{lemma3} that each element of $\Zn$ occurs $(\alpha(p)-u)\lambda$ times in each row of $C_u$. Therefore, the multiplicity function of $C_u$ is the constant function defined by
$$
\m_{C_u}(x) = (\alpha(p)-u){\lambda}^2n,\ \forall x\in\Zn.
$$

\noindent\textbf{Step (2):} The multiplicity function of the union of all the multisets $B_{(s,t)}$ is constant.
\par For every integer $1\leq s\leq\alpha(p)$ and every integer $1\leq t\leq p-1$, the multiset $B_{(s,t)}$ is defined by
$$
B_{(s,t)} = \left\{ {\Delta X}(i,j)\ \middle|\ \begin{array}{l}((s-1)p+t-1)\lambda\frac{n}{p}+1\leq i\leq((s-1)p+t)\lambda\frac{n}{p}\\ ((s-1)p+t)\lambda\frac{n}{p}-i+2\leq j\leq s\lambda n-i+1\end{array} \right\}.
$$
As depicted in Figure~\ref{fig7}, each multiset $B_{(s,t)}$ is a parallelogram of $\lambda\frac{n}{p}$ rows and $(p-t)\lambda\frac{n}{p}$ columns.

\begin{figure}[!h]
\centering
\begin{pspicture}(6.75,2.8383)
\pspolygon(0,0)(1.35,2.3383)(6.75,2.3383)(5.4,0)
\psline{<->}(1.35,2.5383)(6.75,2.5383)
\rput(4.05,2.8383){$(p-t)\lambda\frac{n}{p}$}
\psline{<->}(6.95,2.3383)(6.95,0)
\rput(7.25,1.1691){$\lambda\frac{n}{p}$}
\rput(3.375,1.1691){$B_{(s,t)}$}
\end{pspicture}
\caption{\label{fig7}Structure of $B_{(s,t)}$}
\end{figure}
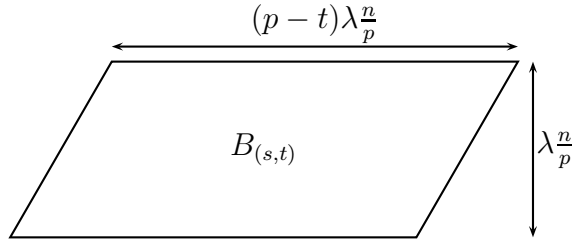

We will construct a fixed-point-free involution $\Psi$ on the set of pairs $(s,t)$ such that the multiplicity function of the multiset union $B_{(s,t)}\cup B_{\Psi(s,t)}$ is constant for every pair $(s,t)$. Let
$$
\Psi : \left\{
\begin{array}{ccc}
\llbracket1,\alpha(p)\rrbracket\times\llbracket1,p-1\rrbracket & \longrightarrow & \llbracket1,\alpha(p)\rrbracket\times\llbracket1,p-1\rrbracket\\
(s,t) & \longmapsto & \left(\psi(s,t),p-t\right)
\end{array}
,\right.
$$
where $\psi : \llbracket1,\alpha(p)\rrbracket\times\llbracket1,p-1\rrbracket\longrightarrow\llbracket1,\alpha(p)\rrbracket$ is the function which assigns to each pair $(s,t)$ the positive integer $\psi(s,t)$ in $\llbracket1,\alpha(p)\rrbracket$ which is equivalent to $s+2t-1$ modulo $\alpha(p)$, that is,
$$
\psi(s,t) \equiv s+2t-1 \pmod{\alpha(p)},\ \forall 1\leq s\leq\alpha(p),\ \forall 1\leq t\leq p-1.
$$
Since $\alpha(p)$ divides $\varphi(p)=p-1$, it follows that
$$
\psi(\psi(s,t),p-t)\equiv\psi(s,t)+2p-2t-1\equiv s+2(p-1) \equiv s \pmod{\alpha(p)}
$$
and hence, we obtain that
$$
\Psi\left(\Psi(s,t)\right)=\Psi(\psi(s,t),p-t)=(\psi(\psi(s,t),p-t),t)=(s,t),
$$
for every $(s,t)$ in $\llbracket1,\alpha(p)\rrbracket\times\llbracket1,p-1\rrbracket$. Moreover, this involution has no fixed point. Indeed, if $(s,t)$ were a fixed point of $\Psi$, then
$$
(s,t) = \Psi(s,t) = (\psi(s,t),p-t),
$$
implying $p=2t$, in contradiction with the parity of $p$. We have proved that $\Psi$ is a fixed-point-free involution on the set $\llbracket1,\alpha(p)\rrbracket\times\llbracket1,p-1\rrbracket$.

Let $1\leq s\leq\alpha(p)$ and let $1\leq t\leq p-1$. If we denote by $B_{(s,t)}^{(v)}$ the $v$th row of $B_{(s,t)}$, that is,
\small
$$
B_{(s,t)}^{(v)} = \left\{ \Delta X\left( ((s-1)p+t-1)\lambda\frac{n}{p}+v,j \right)\ \middle|\ \lambda\frac{n}{p}-v+2\leq j\leq (p-t+1)\lambda\frac{n}{p}-v+1 \right\},
$$
\normalsize
for all $1\leq v\leq \lambda\frac{n}{p}$, then
$$
B_{(s,t)} = \bigcup_{v=1}^{\lambda\frac{n}{p}}{B_{(s,t)}^{(v)}}.
$$
Let $1\leq v\leq\lambda\frac{n}{p}$. The sequence $B_{(s,t)}^{(v)}$ is composed of $(p-t)\lambda\frac{n}{p}$ consecutive terms of the derived sequence
$$
\partial^{((s-1)p+t-1)\lambda\frac{n}{p}+v-1}X,
$$
which is an arithmetic progression with common difference
$$
2^{((s-1)p+t-1)\lambda\frac{n}{p}+v-1}d
$$
by Proposition \ref{prop4}. It follows that
$$
B_{(s,t)}^{(v)} = AP\left( b_{(s,t)}^{(v)}\ ,\ 2^{((s-1)p+t-1)\lambda\frac{n}{p}+v-1}d\ ,\ (p-t)\lambda\frac{n}{p} \right),
$$
with
$$
b_{(s,t)}^{(v)} = \Delta X\left(((s-1)p+t-1)\lambda\frac{n}{p}+v,\lambda\frac{n}{p}-v+2\right).
$$
We will show that the sequence $B_{(s,t)}^{(v)}\circ B_{\Psi(s,t)}^{(v)}$, \textit{the concatenation of the sequences $\mathit{B_{(s,t)}^{(v)}}$ and $\mathit{B_{\Psi(s,t)}^{(v)}}$}, is an arithmetic progression with invertible common difference and of length $\lambda n$. The congruence $p\equiv1\pmod{\alpha(p)}$ implies that
$$
(s-1)p+t-1 \equiv s+t-2 \pmod{\alpha(p)},
$$
and
$$
(\psi(s,t)-1)p+(p-t)-1 \equiv \psi(s,t)-1-t \equiv s+t-2 \pmod{\alpha(p)}.
$$
Since $\alpha(n)$ divides $\lambda\alpha(p)$, it follows that
$$
2^{((\psi(s,t)-1)p+(p-t)-1)\lambda} \equiv 2^{((s-1)p+t-1)\lambda} \equiv 2^{(s+t-2)\lambda} \pmod{n},
$$
and hence,
$$
2^{((\psi(s,t)-1)p+(p-t)-1)\lambda\frac{n}{p}+v-1}d = 2^{((s-1)p+t-1)\lambda\frac{n}{p}+v-1}d = 2^{(s+t-2)\lambda\frac{n}{p}+v-1}d.
$$
Therefore the sequences $B_{(s,t)}^{(v)}$ and $B_{\Psi(s,t)}^{(v)}$ are both arithmetic progressions with common difference
$$
2^{(s+t-2)\lambda\frac{n}{p}+v-1}d.
$$
It remains to prove that $b_{\Psi(s,t)}^{(v)}$ can be expressed as the next element of the arithmetic progression $B_{(s,t)}^{(v)}$. Since
$$
b_{\Psi(s,t)}^{(v)}
\begin{array}[t]{l}
= \Delta X\left(((\Psi(s,t)-1)p+(p-t)-1)\lambda\frac{n}{p}+v,\lambda\frac{n}{p}-v+2\right)\\
= 2^{((\psi(s,t)-1)p+(p-t)-1)\lambda\frac{n}{p}+v-2}\left( 2a + \left( 2\left( \lambda\frac{n}{p}-v+2 \right) + \right.\right.\\
\ \ \ + \left.\left.\left( ((\psi(s,t)-1)p+(p-t)-1)\lambda\frac{n}{p}+v \right) -3 \right)d \right)\\
= 2^{(s+t-2)\lambda\frac{n}{p}+v-2}\left( 2a + \left( ((p-t)+1)\lambda\frac{n}{p}-v+1 \right) d \right)\\
= 2^{(s+t-2)\lambda\frac{n}{p}+v-2}\left( 2a + \left( (t+1)\lambda\frac{n}{p}-v+1 \right) d \right) + (p-2t)\lambda\frac{n}{p}\left(2^{(s+t-2)\lambda\frac{n}{p}+v-2}d\right)\\
= b_{(s,t)}^{(v)} + (p-t)\lambda\frac{n}{p}\left(2^{(s+t-2)\lambda\frac{n}{p}+v-2}d\right),
\end{array}
$$
it follows that
$$
B_{(s,t)}^{(v)}\circ B_{\Psi(s,t)}^{(v)} = AP\left( b_{(s,t)}^{(v)}\ ,\ 2^{(s+t-2)\lambda\frac{n}{p}+v-1}d\ ,\ \lambda n \right),
$$
and so, each element of $\Zn$ occurs $\lambda$ times in $B_{(s,t)}^{(v)}\circ B_{\Psi(s,t)}^{(v)}$ for every $1\leq v\leq\lambda\frac{n}{p}$. Then the multiplicity function of the multiset union $B_{(s,t)}\cup B_{\Psi(s,t)}$ is the constant function defined by
$$
\m_{B_{(s,t)}\cup B_{\Psi(s,t)}}(x) = {\lambda}^2\frac{n}{p},\ \forall x\in\Zn.
$$
If we denote by $B$ the union of all the multisets $B_{(s,t)}$, then
$$
\m_B(x) = \displaystyle\sum_{s=1}^{\alpha(p)}\sum_{t=1}^{p-1}\m_{B_{(s,t)}}(x) = \frac{1}{2}\sum_{s=1}^{\alpha(p)}\sum_{t=1}^{p-1}\m_{B_{(s,t)}\cup B_{\Psi(s,t)}}(x) = \displaystyle\frac{1}{2}\sum_{s=1}^{\alpha(p)}\sum_{t=1}^{p-1}{\lambda}^2\frac{n}{p} = \alpha(p){\lambda}^{2}\frac{(p-1)n}{2p},
$$
for every $x$ in $\Zn$, since $\Psi$ is a fixed-point-free involution on $\llbracket1,\alpha(p)\rrbracket\times\llbracket1,p-1\rrbracket$.

\noindent\textbf{Step (3):} The multiplicity function of the union of all the multisets $A_r$ is constant on each coset of the subgroup $\frac{n}{p}\Zn$.
\par For every integer $1\leq r\leq\alpha(p)p$, the multiset $A_r$ is defined by
$$
A_r = \left\{ {\Delta X}(i,j)\ \middle|\ (r-1)\lambda\frac{n}{p}+1\leq i\leq r\lambda\frac{n}{p}\ ,\ 1\leq j\leq r\lambda\frac{n}{p}-i+1 \right\}.
$$
As depicted in Figure~\ref{fig8}, each multiset $A_r$ is a triangle associated to a sequence of length $\lambda\frac{n}{p}$.

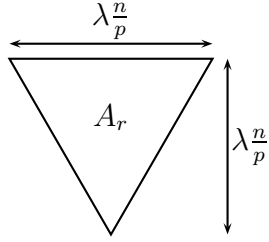
\begin{figure}[!h]
\centering
\begin{pspicture}(2.7,2.8383)
\pspolygon(0,2.3383)(2.7,2.3383)(1.35,0)
\psline{<->}(0,2.5383)(2.7,2.5383)
\rput(1.35,2.8383){$\lambda\frac{n}{p}$}
\psline{<->}(2.9,2.3383)(2.9,0)
\rput(3.2,1.1691){$\lambda\frac{n}{p}$}
\rput(1.35,1.5588){$A_r$}
\end{pspicture}
\caption{\label{fig8}Structure of $A_r$}
\end{figure}

If we denote by $X_r$ the sequence of the first $\lambda\frac{n}{p}$ terms of the derived sequence $\partial^{(r-1)\lambda\frac{n}{p}}X$, then $A_r$ is the Steinhaus triangle generated by $X_r$, for every $1\leq r\leq\alpha(p)p$. It is clear that there exists a correspondence between $A_r$ and the whole Steinhaus triangle $\Delta X$. Indeed, for every integer $1\leq i\leq\lambda\frac{n}{p}$ and every integer $1\leq j\leq\lambda\frac{n}{p}-i+1$, we have
$$
\Delta X_r(i,j) = \Delta X\left((r-1)\lambda\frac{n}{p}+i,j\right).
$$
Let $1\leq l\leq \alpha(p)$, $1\leq i\leq\lambda\frac{n}{p}$ and $1\leq j\leq \lambda\frac{n}{p}-i+1$. We will prove that each element of the coset
$$
\Delta X_l(i,j)+\frac{n}{p}\Zn
$$
occurs once in the multiset
$$
\left\{\Delta X_{l+k\alpha(p)}(i,j)\ \middle|\ k\in\llbracket0,p-1\rrbracket\right\}.
$$
First, the equality
$$
\lambda n - \lambda\frac{n}{p} = \lambda(p-1)\frac{n}{p} = \varphi\left(\frac{\rad(n)}{p}\right)(p-1)\frac{n}{p} = \varphi(\rad(n))\frac{n}{p} = \varphi(n)\frac{\rad(n)}{p}
$$
implies that
$$
2^{\lambda n} \equiv 2^{\lambda\frac{n}{p}} \pmod{n},
$$
and so,
$$
2^{\alpha(p)\lambda\frac{n}{p}} \equiv 2^{\alpha(p)\lambda n} \equiv 1 \pmod{n},
$$
since $\alpha(n)$ divides $\lambda\alpha(p)$. This leads to
$$
\Delta X_{l+k\alpha(p)}\left(i,j\right)
\begin{array}[t]{l}
= \Delta X\left( (k\alpha(p)+l-1)\lambda\frac{n}{p}+i,j \right)\\
= 2^{(k\alpha(p)+l-1)\lambda\frac{n}{p}+i-1}\left( 2a+\left( 2j+(k\alpha(p)+l-1)\lambda\frac{n}{p}+i-3 \right)d \right)\\
= 2^{k\alpha(p)\lambda\frac{n}{p}}2^{(l-1)\lambda\frac{n}{p}+i-1}\left( 2a+\left( 2j+(k\alpha(p)+l-1)\lambda\frac{n}{p}+i-3 \right)d \right)\\
= 2^{(l-1)\lambda\frac{n}{p}+i-1}\left( 2a+\left( 2j+(l-1)\lambda\frac{n}{p}+i-3 \right)d \right) + k\left(2^{(l-1)\lambda\frac{n}{p}+i-1}\lambda\alpha(p)d\right)\frac{n}{p}\\
= \Delta X\left((l-1)\lambda\frac{n}{p}+i,j\right) + k\left(2^{(l-1)\lambda\frac{n}{p}+i-1}\lambda\alpha(p)d\right)\frac{n}{p}\\
= \Delta X_l\left(i,j\right) + k\left(2^{(l-1)\lambda\frac{n}{p}+i-1}\lambda\alpha(p)d\right)\frac{n}{p},
\end{array}
$$
for every integer $0\leq k\leq p-1$. The congruence $p\equiv1\pmod{\alpha(p)}$ implies that $\alpha(p)$ is not divisible by $p$. Moreover, since $p$ is the greatest prime factor of $n$, it follows that $\lambda=\varphi\left(\frac{\rad(n)}{p}\right)$ is relatively prime to $p$ and hence, the integer $\lambda\alpha(p)$ is not divisible by $p$. Therefore, we obtain the following multiset equality
$$
\left\{ \Delta X_{l+k\alpha(p)}(i,j)\ \middle|\ k\in\llbracket0,p-1\rrbracket \right\} = \left\{ \Delta X_{l}(i,j),\Delta X_{l}(i,j)+\frac{n}{p},\ldots,\Delta X_{l}(i,j)+\frac{(p-1)n}{p}\right\},
$$
for every $1\leq l\leq\alpha(p)$, $1\leq i\leq\lambda\frac{n}{p}$ and $1\leq j\leq\lambda\frac{n}{p}-i+1$. If we denote by $A$ the union of all the multisets $A_r$, then the multiplicity function of $A$ is constant on each coset of the subgroup $\frac{n}{p}\Zn$.

We now combine the results obtained above. By Steps 1 and 2, we have
$$
\m_{\Delta X}(x)
\begin{array}[t]{l}
= \m_A(x) + \m_B(x) + \displaystyle\sum_{u=1}^{\alpha(p)-1}\m_{C_u}(x)\\
= \m_A(x) + \alpha(p){\lambda}^{2}\frac{(p-1)n}{2p} + \displaystyle\sum_{u=1}^{\alpha(p)-1}(\alpha(p)-u){\lambda}^{2}n\\
= \m_A(x) + \alpha(p){\lambda}^{2}\frac{(p-1)n}{2p} + \displaystyle\binom{\alpha(p)}{2}{\lambda}^{2}n,
\end{array}
$$
for every $x$ in $\Zn$ and so, by Step 3, the multiplicity function $\m_{\Delta X}$ is constant on each coset of the subgroup $\frac{n}{p}\Zn=q\Zn$.

We now end the proof by showing that $\pi_q(X)$, the image of the sequence $X$ under the surjective morphism $\pi_q : \Zn\twoheadlongrightarrow\Z/q\Z$ with $q=\frac{n}{p}$, is balanced. First, the sequence $\pi_q(X)$ is the arithmetic progression beginning with $\pi_q(a)\in\Z/q\Z$, with common difference $\pi_q(d)\in\Z/q\Z$ and of length $\lambda\alpha(p)n$, that is,
$$
\pi_q\left(X\right) = \pi_q\left(AP(a,d,\lambda\alpha(p)n)\right) = AP\left( \pi_q(a),\pi_q(d),\lambda\alpha(p)n \right).
$$
Moreover, the integer $\lambda\alpha(p)$ is divisible by $\alpha(q)$. Indeed, if $v_p(n)\geq2$, then $\rad(q)=\rad(n)$ and so $\alpha(q)$ divides $\alpha(\rad(q))=\alpha(\rad(n))$ by Proposition \ref{prop2}. As seen before, $\lambda\alpha(p)$ is divisible by $\alpha(\rad(n))$ and then, $\alpha(q)$ divides $\lambda\alpha(p)$. Otherwise, if $v_p(n)=1$, then $\rad(q)=\frac{\rad(n)}{p}$ and so $\alpha(q)$ divides $\alpha(\rad(q))=\alpha\left(\frac{\rad(n)}{p}\right)$ by Proposition \ref{prop2}. Since $\lambda=\varphi\left(\frac{\rad(n)}{p}\right)$ is divisible by $\alpha\left(\frac{\rad(n)}{p}\right)$, it follows that $\alpha(q)$ divides $\lambda$. In all cases, we have
$$
\lambda\alpha(p) \equiv 0 \pmod{\alpha(q)}.
$$
Therefore, the induction hypothesis implies that the sequence $\pi_q(X)$ is balanced, since it is an arithmetic progression with invertible common difference $\pi_q(d)$ and of length $\lambda\alpha(p)n$ divisible by $\alpha(q)q$.

We conclude that the sequence $X$ is balanced by Theorem \ref{thm2}. This completes the proof of Theorem \ref{mainthm1}.
\end{proof}

For example, in $\Z/7\Z$, the arithmetic progression $AP(1,3,20)$ is balanced since $\alpha(7)=3$ and $3$ is an invertible element in $\Z/7\Z$. Indeed, each element of $\Z/7\Z$ occurs $30$ times in this Steinhaus triangle.

\begin{figure}[!h]
\centering
\begin{pspicture}(11,8.8768)
\pspolygon(0.375,8.8768)(10.625,8.8768)(5.5,0)
\rput(0.75,8.6603){$1$}
\rput(1.25,8.6603){$4$}
\rput(1.75,8.6603){$0$}
\rput(2.25,8.6603){$3$}
\rput(2.75,8.6603){$6$}
\rput(3.25,8.6603){$2$}
\rput(3.75,8.6603){$5$}
\rput(4.25,8.6603){$1$}
\rput(4.75,8.6603){$4$}
\rput(5.25,8.6603){$0$}
\rput(5.75,8.6603){$3$}
\rput(6.25,8.6603){$6$}
\rput(6.75,8.6603){$2$}
\rput(7.25,8.6603){$5$}
\rput(7.75,8.6603){$1$}
\rput(8.25,8.6603){$4$}
\rput(8.75,8.6603){$0$}
\rput(9.25,8.6603){$3$}
\rput(9.75,8.6603){$6$}
\rput(10.25,8.6603){$2$}

\rput(1,8.2272){$5$}
\rput(1.5,8.2272){$4$}
\rput(2,8.2272){$3$}
\rput(2.5,8.2272){$2$}
\rput(3,8.2272){$1$}
\rput(3.5,8.2272){$0$}
\rput(4,8.2272){$6$}
\rput(4.5,8.2272){$5$}
\rput(5,8.2272){$4$}
\rput(5.5,8.2272){$3$}
\rput(6,8.2272){$2$}
\rput(6.5,8.2272){$1$}
\rput(7,8.2272){$0$}
\rput(7.5,8.2272){$6$}
\rput(8,8.2272){$5$}
\rput(8.5,8.2272){$4$}
\rput(9,8.2272){$3$}
\rput(9.5,8.2272){$2$}
\rput(10,8.2272){$1$}

\rput(1.25,7.7942){$2$}
\rput(1.75,7.7942){$0$}
\rput(2.25,7.7942){$5$}
\rput(2.75,7.7942){$3$}
\rput(3.25,7.7942){$1$}
\rput(3.75,7.7942){$6$}
\rput(4.25,7.7942){$4$}
\rput(4.75,7.7942){$2$}
\rput(5.25,7.7942){$0$}
\rput(5.75,7.7942){$5$}
\rput(6.25,7.7942){$3$}
\rput(6.75,7.7942){$1$}
\rput(7.25,7.7942){$6$}
\rput(7.75,7.7942){$4$}
\rput(8.25,7.7942){$2$}
\rput(8.75,7.7942){$0$}
\rput(9.25,7.7942){$5$}
\rput(9.75,7.7942){$3$}

\rput(1.5,7.3612){$2$}
\rput(2,7.3612){$5$}
\rput(2.5,7.3612){$1$}
\rput(3,7.3612){$4$}
\rput(3.5,7.3612){$0$}
\rput(4,7.3612){$3$}
\rput(4.5,7.3612){$6$}
\rput(5,7.3612){$2$}
\rput(5.5,7.3612){$5$}
\rput(6,7.3612){$1$}
\rput(6.5,7.3612){$4$}
\rput(7,7.3612){$0$}
\rput(7.5,7.3612){$3$}
\rput(8,7.3612){$6$}
\rput(8.5,7.3612){$2$}
\rput(9,7.3612){$5$}
\rput(9.5,7.3612){$1$}

\rput(1.75,6.9282){$0$}
\rput(2.25,6.9282){$6$}
\rput(2.75,6.9282){$5$}
\rput(3.25,6.9282){$4$}
\rput(3.75,6.9282){$3$}
\rput(4.25,6.9282){$2$}
\rput(4.75,6.9282){$1$}
\rput(5.25,6.9282){$0$}
\rput(5.75,6.9282){$6$}
\rput(6.25,6.9282){$5$}
\rput(6.75,6.9282){$4$}
\rput(7.25,6.9282){$3$}
\rput(7.75,6.9282){$2$}
\rput(8.25,6.9282){$1$}
\rput(8.75,6.9282){$0$}
\rput(9.25,6.9282){$6$}

\rput(2,6.4952){$6$}
\rput(2.5,6.4952){$4$}
\rput(3,6.4952){$2$}
\rput(3.5,6.4952){$0$}
\rput(4,6.4952){$5$}
\rput(4.5,6.4952){$3$}
\rput(5,6.4952){$1$}
\rput(5.5,6.4952){$6$}
\rput(6,6.4952){$4$}
\rput(6.5,6.4952){$2$}
\rput(7,6.4952){$0$}
\rput(7.5,6.4952){$5$}
\rput(8,6.4952){$3$}
\rput(8.5,6.4952){$1$}
\rput(9,6.4952){$6$}

\rput(2.25,6.0622){$3$}
\rput(2.75,6.0622){$6$}
\rput(3.25,6.0622){$2$}
\rput(3.75,6.0622){$5$}
\rput(4.25,6.0622){$1$}
\rput(4.75,6.0622){$4$}
\rput(5.25,6.0622){$0$}
\rput(5.75,6.0622){$3$}
\rput(6.25,6.0622){$6$}
\rput(6.75,6.0622){$2$}
\rput(7.25,6.0622){$5$}
\rput(7.75,6.0622){$1$}
\rput(8.25,6.0622){$4$}
\rput(8.75,6.0622){$0$}

\rput(2.5,5.6292){$2$}
\rput(3,5.6292){$1$}
\rput(3.5,5.6292){$0$}
\rput(4,5.6292){$6$}
\rput(4.5,5.6292){$5$}
\rput(5,5.6292){$4$}
\rput(5.5,5.6292){$3$}
\rput(6,5.6292){$2$}
\rput(6.5,5.6292){$1$}
\rput(7,5.6292){$0$}
\rput(7.5,5.6292){$6$}
\rput(8,5.6292){$5$}
\rput(8.5,5.6292){$4$}

\rput(2.75,5.1962){$3$}
\rput(3.25,5.1962){$1$}
\rput(3.75,5.1962){$6$}
\rput(4.25,5.1962){$4$}
\rput(4.75,5.1962){$2$}
\rput(5.25,5.1962){$0$}
\rput(5.75,5.1962){$5$}
\rput(6.25,5.1962){$3$}
\rput(6.75,5.1962){$1$}
\rput(7.25,5.1962){$6$}
\rput(7.75,5.1962){$4$}
\rput(8.25,5.1962){$2$}

\rput(3,4.7631){$4$}
\rput(3.5,4.7631){$0$}
\rput(4,4.7631){$3$}
\rput(4.5,4.7631){$6$}
\rput(5,4.7631){$2$}
\rput(5.5,4.7631){$5$}
\rput(6,4.7631){$1$}
\rput(6.5,4.7631){$4$}
\rput(7,4.7631){$0$}
\rput(7.5,4.7631){$3$}
\rput(8,4.7631){$6$}

\rput(3.25,4.3301){$4$}
\rput(3.75,4.3301){$3$}
\rput(4.25,4.3301){$2$}
\rput(4.75,4.3301){$1$}
\rput(5.25,4.3301){$0$}
\rput(5.75,4.3301){$6$}
\rput(6.25,4.3301){$5$}
\rput(6.75,4.3301){$4$}
\rput(7.25,4.3301){$3$}
\rput(7.75,4.3301){$2$}

\rput(3.5,3.8971){$0$}
\rput(4,3.8971){$5$}
\rput(4.5,3.8971){$3$}
\rput(5,3.8971){$1$}
\rput(5.5,3.8971){$6$}
\rput(6,3.8971){$4$}
\rput(6.5,3.8971){$2$}
\rput(7,3.8971){$0$}
\rput(7.5,3.8971){$5$}

\rput(3.75,3.4641){$5$}
\rput(4.25,3.4641){$1$}
\rput(4.75,3.4641){$4$}
\rput(5.25,3.4641){$0$}
\rput(5.75,3.4641){$3$}
\rput(6.25,3.4641){$6$}
\rput(6.75,3.4641){$2$}
\rput(7.25,3.4641){$5$}

\rput(4,3.0311){$6$}
\rput(4.5,3.0311){$5$}
\rput(5,3.0311){$4$}
\rput(5.5,3.0311){$3$}
\rput(6,3.0311){$2$}
\rput(6.5,3.0311){$1$}
\rput(7,3.0311){$0$}

\rput(4.25,2.5981){$4$}
\rput(4.75,2.5981){$2$}
\rput(5.25,2.5981){$0$}
\rput(5.75,2.5981){$5$}
\rput(6.25,2.5981){$3$}
\rput(6.75,2.5981){$1$}

\rput(4.5,2.1651){$6$}
\rput(5,2.1651){$2$}
\rput(5.5,2.1651){$5$}
\rput(6,2.1651){$1$}
\rput(6.5,2.1651){$4$}

\rput(4.75,1.7321){$1$}
\rput(5.25,1.7321){$0$}
\rput(5.75,1.7321){$6$}
\rput(6.25,1.7321){$5$}

\rput(5,1.2990){$1$}
\rput(5.5,1.2990){$6$}
\rput(6,1.2990){$4$}

\rput(5.25,0.8660){$0$}
\rput(5.75,0.8660){$3$}

\rput(5.5,0.4330){$3$}

\end{pspicture}
\caption{The Steinhaus triangle $\Delta AP(1,3,20)$}
\end{figure}
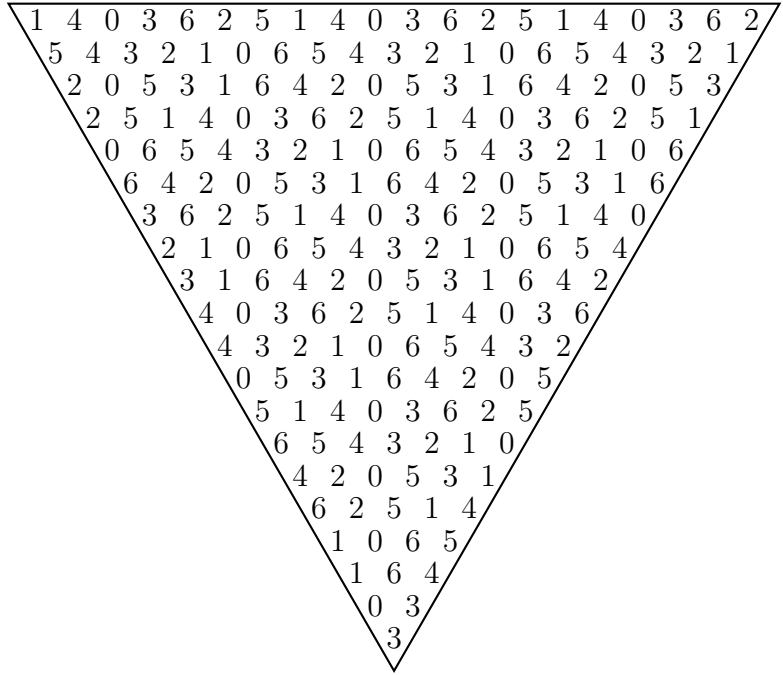

Since there are $n$ distinct elements $a$ in $\Zn$ and $\varphi(n)$ distinct invertible elements $d$ in $\Zn$, it follows that, for each positive integer $m$, there exist exactly $n\varphi(n)$ distinct arithmetic progressions $AP(a,d,m)$ with invertible common difference in $\Zn$ and of length $m$. Therefore, for $n$ odd, Theorem \ref{mainthm1} implies that there exist at least $n\varphi(n)$ balanced sequences of length $m$ for every positive integer $m\equiv0\pmod{\alpha(n)n}$ or $m\equiv-1\pmod{\alpha(n)n}$. However, this is not sufficient to completely settle Molluzzo's Problem, as shown by the following proposition. This shortcoming will be partly overcome in the next section.

\begin{prop}
Let $n>1$ be an odd number. Then
$$
\alpha(n)\geq2.
$$
\end{prop}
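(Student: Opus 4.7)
The plan is to argue by contradiction. Suppose $n>1$ is odd and $\alpha(n)=1$, which by definition means $2^n\equiv 1\pmod{n}$. I want to derive an impossibility by looking at the smallest prime factor of $n$.

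Let $p$ be the smallest prime divisor of $n$; since $n$ is odd, $p\geq 3$. The congruence $2^n\equiv 1\pmod{n}$ restricts to $2^n\equiv 1\pmod{p}$. Let $d$ be the multiplicative order of $2$ modulo $p$. Then $d\mid n$ (from the displayed congruence) and $d\mid p-1$ (by Fermat's little theorem, since $\gcd(2,p)=1$). Consequently $d$ divides $\gcd(n,p-1)$.

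The key observation is that $\gcd(n,p-1)=1$: every prime factor of $p-1$ is strictly less than $p$, and by minimality of $p$ among the prime divisors of $n$, no such prime can divide $n$. Hence $d=1$, which forces $2\equiv 1\pmod{p}$, i.e.\ $p\mid 1$, a contradiction. Therefore $\alpha(n)\geq 2$, as claimed.

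I expect no real obstacle here; the only thing to be careful about is invoking the correct definition of $\alpha(n)$ (the multiplicative order of $2^n$ modulo $n$, equivalently the smallest $e\geq 1$ with $2^{en}\equiv 1\pmod{n}$) and remembering that $\alpha(n)=1$ is exactly the statement $2^n\equiv 1\pmod{n}$, which is the classical obstruction ruling out odd $n>1$ as a ``Fermat base $2$ pseudoprime with exponent $n$''.
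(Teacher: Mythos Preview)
Your proof is correct and follows essentially the same route as the paper: assume $\alpha(n)=1$, pass to the smallest prime factor $p$ of $n$, and use that $\mathrm{ord}_p(2)$ divides both $n$ and $p-1$, which are coprime, forcing $\mathrm{ord}_p(2)=1$, a contradiction. The paper's version is slightly terser but the argument is identical.
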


\begin{proof}
Let
$$
n = {p_1}^{r_1}\cdots{p_k}^{r_k}
$$
be the prime factorization of the odd number $n>1$. If $\alpha(n)=1$, then
$$
2^{n} \equiv 1 \pmod{n}.
$$
Let $p_j$ be the least prime factor of $n$. Since
$$
2^{n} \equiv 1 \pmod{p_j},
$$
it follows that $\ord_{p_j}(2)$ divides $n$, in contradiction with the fact that $\ord_{p_j}(2)$ divides $p_j-1$ which is relatively prime to $n$.
\end{proof}

%%%%%%%%%%%%%%%%%%%%%%%%%%%%%%%%%%%%%%%%%%%%%%%%%%%%%%%%%%%%%%%%%%%%%%%%%%%%%%%%%%%%%%%%%%%%%%%%%%%%
\section{The antisymmetric case}%%%%%%%%%%%%%%%%%%%%%%%%%%%%%%%%%%%%%%%%%%%%%%%%%%%%%%%%%%%%%%%%%%%%
%%%%%%%%%%%%%%%%%%%%%%%%%%%%%%%%%%%%%%%%%%%%%%%%%%%%%%%%%%%%%%%%%%%%%%%%%%%%%%%%%%%%%%%%%%%%%%%%%%%%

In Section 4, we have seen that there exist infinitely many balanced sequences in $\Zn$ for $n$ odd. More precisely, Theorem \ref{mainthm1} states that all the arithmetic progressions with invertible common difference and of length $m\equiv 0$ or $-1\pmod{\alpha(n)n}$ are balanced. In this section we refine this result by considering the antisymmetric sequences in $\Zn$. This will be sufficient to settle Molluzzo's problem for any $n=3^k$.

Let $X=(x_1,x_2,\ldots,x_m)$ be a finite sequence of length $m\geq1$ in $\Zn$. The sequence $X$ is said to be \textit{antisymmetric} if $x_{m-i+1} = -x_i$, for every integer $1\leq i\leq m$.

We first show that the antisymmetry is preserved by the derivation process and we study the condition to have an antisymmetric primitive sequence of an antisymmetric sequence.

\begin{prop}
Let $X=(x_1,x_2,\ldots,x_m)$ be a finite sequence of length $m\geq1$ in $\Zn$. Then the sequence $X$ is antisymmetric if, and only if, its derived sequence $\partial X$ is also antisymmetric and  $x_{\lceil\frac{m}{2}\rceil} + x_{m-\lceil\frac{m}{2}\rceil+1} = 0$, where $\lceil\frac{m}{2}\rceil$ is the ceiling of $\frac{m}{2}$.
\end{prop}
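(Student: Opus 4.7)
The plan is to rephrase the antisymmetry condition in a form that makes the equivalence almost immediate. For a sequence $X=(x_1,\ldots,x_m)$ I will introduce the auxiliary quantities
$$
s_i = x_i + x_{m-i+1}, \qquad 1\leq i\leq m,
$$
so that by definition $X$ is antisymmetric if and only if $s_i = 0$ for all $i$. Note also the built-in symmetry $s_{m-i+1}=s_i$.

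For the forward implication, if $X$ is antisymmetric then all $s_i=0$, which trivially gives the displayed middle condition $x_{\lceil m/2\rceil} + x_{m-\lceil m/2\rceil+1} = s_{\lceil m/2\rceil} = 0$. To see that $\partial X = (y_1,\ldots,y_{m-1})$ with $y_i = x_i + x_{i+1}$ is antisymmetric, I compute
$$
y_{m-i} + y_i = (x_{m-i} + x_{m-i+1}) + (x_i + x_{i+1}) = s_{i+1} + s_i = 0
$$
for every $1\leq i\leq m-1$, which is precisely the antisymmetry of $\partial X$.

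For the reverse implication, I will run the same computation backwards: the hypothesis that $\partial X$ is antisymmetric is equivalent to the recurrence $s_{i+1}=-s_i$ for $1\leq i\leq m-1$. Hence the whole sequence $(s_i)$ is determined up to sign by any one of its entries. The second hypothesis provides exactly the one entry that is zero, namely $s_{\lceil m/2\rceil}=0$ (one must just check that the two cases $m$ even and $m$ odd both match the expression $x_{\lceil m/2\rceil}+x_{m-\lceil m/2\rceil+1}$, which they do: for odd $m$ this is $2x_{(m+1)/2}$, corresponding to $s_{(m+1)/2}$, and for even $m$ it is $x_{m/2}+x_{m/2+1}=s_{m/2}$). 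Propagating $s_{i+1}=-s_i$ outwards from this central zero forces $s_i=0$ for all $i$, so $X$ is antisymmetric.

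There is no real obstacle here; the only point requiring a little care is the verification that $s_{\lceil m/2\rceil}$ is really the ``central'' index at which the recurrence must be pinned down, and that a single zero value together with the two-term recurrence $s_{i+1}=-s_i$ indeed forces the entire sequence $(s_i)$ to vanish. Both checks are straightforward parity arguments.
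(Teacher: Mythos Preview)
Your proof is correct. The forward direction is essentially identical to the paper's. For the converse, however, you take a cleaner route: by packaging the antisymmetry defect into the quantities $s_i = x_i + x_{m-i+1}$, the antisymmetry of $\partial X$ becomes the single recurrence $s_{i+1} = -s_i$, and one zero value at the central index forces all $s_i$ to vanish. The paper instead proves by induction an explicit inversion formula expressing $x_i$ in terms of $x_j$ and the intermediate $y_k$'s, and then evaluates $x_{m-i+1}+x_i$ by telescoping from the middle index, pairing each $y_k$ with $y_{m-k}$. The content is the same---both arguments ultimately use that $y_i + y_{m-i} = s_i + s_{i+1}$---but your formulation avoids the auxiliary induction and the somewhat heavy double sum in the paper, at the cost of being slightly less explicit about how the individual $x_i$ depend on the $y_k$.
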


\begin{proof}
We set $X=(x_1,x_2,\ldots,x_m)$ and $\partial X=Y=(y_1,y_2,\ldots,y_{m-1})$ its derived sequence.
\begin{itemize}
\item[$\Longrightarrow$]
For every integer $1\leq i\leq m-1$, we have
$$
y_{m-i}+y_i = (x_{m-i}+x_{m-i+1})+(x_i+x_{i+1}) = (x_{m-i+1}+x_i)+(x_{m-i}+x_{i+1}) = 0.
$$
\item[$\Longleftarrow$]
By induction, we can prove that
$$
\begin{array}{rl}
x_i & = {(-1)}^{j-i}x_j + \displaystyle\sum_{k=i}^{j-1}{(-1)}^{k-i}y_k,\\
x_j & = {(-1)}^{j-i}x_i + \displaystyle\sum_{k=i}^{j-1}{(-1)}^{j-k-1}y_k,
\end{array}
$$
for all integers $1\leq i<j\leq m$. It follows that
$$
\begin{array}{l}
x_{m-i+1}+x_i = {(-1)}^{\lceil\frac{m}{2}\rceil-i}x_{m-\lceil\frac{m}{2}\rceil+1} + \displaystyle\sum_{k=m-\lceil\frac{m}{2}\rceil+1}^{m-i}{(-1)}^{m-k-i}y_k + {(-1)}^{\lceil\frac{m}{2}\rceil-i}x_{\lceil\frac{m}{2}\rceil}\\
+ \displaystyle\sum_{k=i}^{\lceil\frac{m}{2}\rceil-1}{(-1)}^{k-i}y_k = {(-1)}^{\lceil\frac{m}{2}\rceil-i}\underbrace{\left(x_{\lceil\frac{m}{2}\rceil}+x_{m-\lceil\frac{m}{2}\rceil+1}\right)}_{=0} + \sum_{k=i}^{\lceil\frac{m}{2}\rceil-1}{(-1)}^{k-i}(\underbrace{y_k+y_{m-k}}_{=0})= 0,
\end{array}
$$
for every integer $1\leq i\leq\lceil\frac{m}{2}\rceil-1$.
\end{itemize}
This completes the proof.
\end{proof}

\begin{prop}\label{prop9}
Let $n$ be an odd number. Let $a$ and $d$ be in $\Zn$. Then, the arithmetic progression $AP(a,d,m)$ of length $m\geq2$ is antisymmetric if, and only if, its derived sequence $AP(2a+d,2d,m-1)$ is also antisymmetric.
\end{prop}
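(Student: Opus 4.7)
The plan is to reduce the statement to a single algebraic condition on $a$, $d$ and $m$, and then observe that the relevant divisibility can be cancelled by $2$ because $n$ is odd. The forward implication is a direct consequence of the preceding proposition, which asserts that the derivative of an antisymmetric sequence is antisymmetric; so the only real content lies in the converse.

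To handle both directions uniformly, I would first compute the explicit antisymmetry condition for an arithmetic progression. For $X = AP(a,d,m)$, the $i$th term is $a+(i-1)d$ and the $(m-i+1)$th term is $a+(m-i)d$, so the condition $x_{m-i+1} + x_i = 0$ for all $i$ collapses (by summing or by inspecting any single index) to the single equation
\[
2a+(m-1)d = 0 \quad \text{in } \Zn .
\]
In particular, an arithmetic progression is antisymmetric if and only if this one scalar identity holds.

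Applying the same computation to the derived sequence $\partial X = AP(2a+d,2d,m-1)$, whose length is $m-1$ and whose first term and common difference are $2a+d$ and $2d$, gives the antisymmetry condition
\[
2(2a+d) + (m-2)(2d) = 4a + 2(m-1)d = 2\bigl(2a+(m-1)d\bigr) = 0 .
\]
Thus $AP(2a+d,2d,m-1)$ is antisymmetric if and only if $2\bigl(2a+(m-1)d\bigr) = 0$ in $\Zn$. Because $n$ is odd, $2$ is invertible in $\Zn$, so this is equivalent to $2a+(m-1)d = 0$, which is exactly the antisymmetry condition for $AP(a,d,m)$.

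The only subtle point, and the one that justifies the hypothesis that $n$ is odd, is precisely the cancellation of the factor $2$ in the last step; for even $n$ the implication from the derived sequence back to the original sequence would fail, matching the warning about non-uniqueness of primitives in $\Zn$ for $n$ even given after Proposition~\ref{prop1}. No further case analysis on the parity of $m$ is needed, since the condition $2a+(m-1)d=0$ encodes antisymmetry uniformly. (One could alternatively invoke the preceding proposition and verify only the middle condition $x_{\lceil m/2\rceil}+x_{m-\lceil m/2\rceil+1}=0$, which a short case split on the parity of $m$ reduces to the same equation $2a+(m-1)d=0$, reaching the conclusion by the same invertibility-of-$2$ argument.)
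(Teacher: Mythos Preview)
Your proof is correct and follows essentially the same approach as the paper: both reduce the antisymmetry of an arithmetic progression to the single condition $2a+(m-1)d=0$, compute that the derived sequence's condition is $2\bigl(2a+(m-1)d\bigr)=0$, and use invertibility of $2$ in $\Zn$ for $n$ odd to conclude. The paper's write-up is slightly more compressed (it directly computes $y_{m-i}+y_i = 2(x_{m-j+1}+x_j)$ in one line), but the content is identical.
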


\begin{proof}
We set $X=AP(a,d,m)=(x_1,x_2,\ldots,x_m)$ and $\partial X=AP(2a+d,2d,m-1)=(y_1,y_2,\ldots,y_{m-1})$. It follows that
$$
\begin{array}{rl}
y_{m-i} + y_i & = (2a+d)+(m-i-1)2d + (2a+d)+(i-1)2d = 2(2a+(m-1)d)\\
& = 2(a+(m-j)d+a+(j-1)d) = 2(x_{m-j+1}+x_j),
\end{array}
$$
for all integers $1\leq i<m$ and all integers $1\leq j\leq m$.
\end{proof}

In contrast, for $n$ even, this proposition is not true. For instance, for $n=8$, the arithmetic progression $X=(0,1,2,3,4)$ is not antisymmetric in $\Z/8\Z$ but its derived sequence $\partial X=(1,3,5,7)$ is.

We now determine arithmetic progressions which are antisymmetric in $\Zn$ for $n$ odd.

\begin{prop}\label{prop6}
Let $n$ be an odd number. Let $d$ be in $\Zn$ and $m$ be a positive integer. Then, there exists a unique antisymmetric arithmetic progression of length $m$ and with common difference $d$. Moreover, if $m$ is a multiple of $n$, then the unique antisymmetric arithmetic progression with common difference $d$ and of length $m$ is the sequence $AP(2^{-1}d,d,m)$. If $m\equiv-1\pmod{n}$, then the unique antisymmetric arithmetic progression with common difference $d$ and of length $m$ is the sequence $AP(d,d,m)$.
\end{prop}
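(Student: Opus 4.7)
The plan is to reduce the whole statement to a single linear equation in $a$. Writing $X = AP(a,d,m) = (x_1,\ldots,x_m)$ with $x_i = a+(i-1)d$, I would first compute
\[
x_{m-i+1} + x_i = \bigl(a+(m-i)d\bigr) + \bigl(a+(i-1)d\bigr) = 2a + (m-1)d,
\]
which is independent of $i$. Hence $X$ is antisymmetric if and only if the single condition $2a + (m-1)d = 0$ holds in $\Zn$.

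Now, since $n$ is odd, $2$ is invertible in $\Zn$, so this equation has the unique solution $a = -2^{-1}(m-1)d$. This simultaneously proves existence and uniqueness of an antisymmetric arithmetic progression with common difference $d$ and length $m$.

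It remains to specialise the formula in the two cases singled out by the statement. If $m \equiv 0 \pmod{n}$, then $m-1 \equiv -1 \pmod{n}$, whence $a = -2^{-1}(-1)d = 2^{-1}d$, yielding the sequence $AP(2^{-1}d,d,m)$. If $m \equiv -1 \pmod n$, then $m-1 \equiv -2 \pmod n$, whence $a = -2^{-1}(-2)d = d$, yielding the sequence $AP(d,d,m)$.

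There is no genuine obstacle here: the proposition is a direct two-line consequence of the observation that $x_{m-i+1}+x_i$ does not depend on $i$ for an arithmetic progression, combined with the invertibility of $2$ in $\Zn$ for $n$ odd. The only care needed is to keep the two congruence specialisations straight, since the index shift from $m$ to $m-1$ swaps the roles of the classes $0$ and $-1$ modulo $n$.
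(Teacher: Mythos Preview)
Your proof is correct and follows essentially the same approach as the paper: both reduce antisymmetry of $AP(a,d,m)$ to the single equation $2a+(m-1)d=0$ via the computation $x_{m-i+1}+x_i=2a+(m-1)d$, then use the invertibility of $2$ for $n$ odd. If anything, your version is slightly more complete, since you explicitly carry out the two congruence specialisations that the paper leaves implicit.
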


\begin{proof}
We set $X=AP(a,d,m)=(x_1,x_2,\ldots,x_m)$. If the sequence $X$ is antisymmetric, then
$$
x_{m-i+1}+x_i = 0
$$
for all integers $1\leq i\leq m$. Since
$$
x_{m-i+1}+x_i = a+(m-i)d+a+(i-1)d = 2a+(m-1)d
$$
for each $1\leq i\leq m$, it follows that the arithmetic progression $X$ is antisymmetric if, and only if, $a$, $d$ and the integer $m$ are such that $2a+(m-1)d=0$. Therefore, the sequence
$$
AP\left(2^{-1}(1-m)d,d,m\right)
$$
is the only arithmetic progression of length $m\geq1$ and with common difference $d\in\Zn$ which is antisymmetric. This completes the proof.
\end{proof}

If $n$ is even, the above unicity does not hold in general. For example, in $\Z/8\Z$, the antisymmetric sequences $(0,2,4,6,0)$ and 
$(4,6,0,2,4)$ are both arithmetic progressions of length $m=5$ and of common difference $d=2$.

For every odd number $n$, we denote by $\beta(n)$ \textit{the projective multiplicative order of $2^n$ modulo $n$}, i.e. the smallest positive integer $e$ such that $2^{en}\equiv\pm 1\pmod{n}$, namely
$$
\beta(n) = \min \left\{ e\in\N^*\ \middle|\ 2^{en}\equiv\pm1\pmod{n} \right\}.
$$

Observe that we have the alternative $\alpha(n)=\beta(n)$ or $\alpha(n)=2\beta(n)$. Moreover, $\alpha(n)=2\beta(n)$ if and only if there exists a power $e$ of $2^n$ such that $2^{en}\equiv-1\pmod{n}$. If $n$ is a prime power, then $\beta(n)=\beta(\rad(n))$, in analogy with Proposition \ref{prop8} for $\alpha(n)$.

\begin{prop}\label{prop7}
Let $p$ be an odd prime number. Then,
$$
\beta\left(p^k\right) = \beta\left( p\right),
$$
for every positive integer $k$.
\end{prop}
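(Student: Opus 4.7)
The plan is to establish both divisibilities $\beta(p^k) \mid \beta(p)$ and $\beta(p) \mid \beta(p^k)$, in close analogy with Proposition \ref{prop8}. The underlying fact I will exploit is that, for every positive integer $n$, the set of $e \in \N^*$ such that $2^{en} \equiv \pm 1 \pmod n$ is exactly $\beta(n)\N^*$: writing $e = q\beta(n) + r$ with $0 \leq r < \beta(n)$ and dividing by $(2^{\beta(n) n})^q \in \{\pm 1\}$ forces $2^{rn} \equiv \pm 1 \pmod n$, which by minimality of $\beta(n)$ gives $r = 0$.

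For $\beta(p^k) \mid \beta(p)$, I will adapt the binomial argument of Proposition \ref{prop2}. Inducting on $k$ (the case $k = 1$ being trivial), the key step is to show, for $k \geq 2$, that $\beta(p^{k-1}) \in \beta(p^k)\N^*$. Writing $2^{\beta(p^{k-1}) p^{k-1}} = \varepsilon + u\, p^{k-1}$ with $\varepsilon \in \{\pm 1\}$ and $u \in \Z$, and raising to the $p$-th power yields
$$
2^{\beta(p^{k-1}) p^k} = \varepsilon^p + \binom{p}{1} \varepsilon^{p-1} u\, p^{k-1} + \sum_{j=2}^{p} \binom{p}{j} \varepsilon^{p-j} u^j\, p^{(k-1)j}.
$$
Since $p$ is odd, $\varepsilon^p = \varepsilon$; the linear term equals $\varepsilon^{p-1} u\, p^k$; and every term with $j \geq 2$ is divisible by $p^{2(k-1)} \geq p^k$. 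Hence $2^{\beta(p^{k-1}) p^k} \equiv \pm 1 \pmod{p^k}$, so $\beta(p^k) \mid \beta(p^{k-1})$, and induction on $k$ gives $\beta(p^k) \mid \beta(p)$.

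For the reverse divisibility $\beta(p) \mid \beta(p^k)$, I will reduce modulo $p$ and invoke Fermat's little theorem, exactly as in Proposition \ref{prop8}. From $2^{\beta(p^k) p^k} \equiv \pm 1 \pmod{p^k}$ I obtain $2^{\beta(p^k) p^k} \equiv \pm 1 \pmod p$; since $p \equiv 1 \pmod{p-1}$ gives $p^k \equiv p \pmod{p-1}$, Fermat yields $2^{\beta(p^k) p^k} \equiv 2^{\beta(p^k) p} \pmod p$. Therefore $2^{\beta(p^k) p} \equiv \pm 1 \pmod p$, so $\beta(p^k) \in \beta(p)\N^*$ and hence $\beta(p) \mid \beta(p^k)$.

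The main obstacle is the sign bookkeeping in the first step: unlike the $\alpha$-case of Proposition \ref{prop8} where $1$ propagates unchanged through the binomial expansion, I must ensure that $\pm 1$ remains $\pm 1$ after exponentiation by $p$. This is precisely where the oddness of $p$ enters, via $(-1)^p = -1$; the second step uses oddness only implicitly, through the fact that $p - 1$ divides $p^k - p$.
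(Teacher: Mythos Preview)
Your proof is correct, but it takes a different route from the paper's. The paper does not redo the binomial/Fermat arguments for $\beta$; instead it reduces everything to Proposition~\ref{prop8} by showing that the ratio $\alpha/\beta \in \{1,2\}$ is the same at $p$ and at $p^k$. Concretely, the paper proves the biconditional ``$\alpha(p^k)=2\beta(p^k)$ if and only if $\alpha(p)=2\beta(p)$'' (one direction by reducing $2^{\beta(p^k)p^k}\equiv -1$ modulo $p$ and applying Fermat, the other by lifting $2^{\beta(p)p}\equiv -1$ to $p^k$ via the binomial theorem), and then simply divides the known equality $\alpha(p^k)=\alpha(p)$ by the common factor. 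Your approach instead adapts the proof of Proposition~\ref{prop8} wholesale to the $\pm 1$ setting, tracking the sign $\varepsilon$ through the binomial expansion and using $\varepsilon^p=\varepsilon$ for odd $p$. The paper's argument is shorter and more structural (it isolates the sign question from the order question and reuses Proposition~\ref{prop8}); yours is more self-contained and makes the role of the oddness of $p$ completely explicit at the point where it is needed.
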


\begin{proof}
The result follows from the claim that $\alpha(p^k)=2\beta(p^k)$ if and only if $\alpha(p)=2\beta(p)$.

Indeed, if $\alpha(p^k)=2\beta(p^k)$, then we have $2^{\beta(p^k)p^k}\equiv-1\pmod{p^k}$. This implies that $2^{\beta(p^k)p^k}\equiv-1\pmod{p}$ and so $2^{\beta(p^k)p}\equiv-1\pmod{p}$ by Fermat's little theorem. It follows that $\alpha(p)=2\beta(p)$ and $\beta(p)$ divides $\beta(p^k)$.

Conversely, if $\alpha(p)=2\beta(p)$, then $2^{\beta(p)p}\equiv-1\pmod{p}$. By induction on $k$, it follows from the binomial theorem that there exists a positive integer $u_k$ such that $2^{\beta(p)p^k}=-1+u_kp^k$. This leads to the congruence $2^{\beta(p)p^k}\equiv-1\pmod{p^k}$ and so we have $\alpha(p^k)=2\beta(p^k)$ and $\beta(p^k)$ divides $\beta(p)$.

In either of the two cases $\alpha(p^k)=2\beta(p^k)$ or $\alpha(p^k)=\beta(p^k)$, the result follows from Proposition \ref{prop8}.
\end{proof}

We now improve Theorem \ref{mainthm1} by considering the antisymmetric arithmetic progressions with invertible common difference. There are exactly $\varphi(n)$ such sequences, for every length, by Proposition \ref{prop6}.

\begin{thm}\label{mainthm2}
Let $n$ be an odd number and $d$ be an invertible element in $\Zn$. Then
\begin{itemize}
\item
for every $m\equiv0\pmod{\beta(n)n}$, the arithmetic progression $AP(2^{-1}d,d,m)$ is balanced,
\item
for every $m\equiv-1\pmod{\beta(n)n}$, the arithmetic progression $AP(d,d,m)$ is balanced.
\end{itemize}
\end{thm}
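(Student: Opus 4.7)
The plan is to reduce both cases of Theorem \ref{mainthm2} to Theorem \ref{mainthm1}, using antisymmetry of the relevant arithmetic progression to replace $\alpha(n)$ by $\beta(n)$. For the first bullet I set $X = AP(2^{-1}d,d,m)$ with $m=k\beta(n)n$ and double it: consider $Y = AP(2^{-1}d,d,2m)$. Because $\alpha(n)$ is either $\beta(n)$ or $2\beta(n)$, the length $2m$ is in every case a multiple of $\alpha(n)n$, so Theorem \ref{mainthm1} guarantees that $\m_{\Delta Y}$ is constant on $\Zn$.

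Next I would decompose $\Delta Y$ exactly as in the proof of Lemma \ref{lemma1}: the top-left subtriangle is $\Delta X$, the bottom subtriangle is $\Delta\partial^m Y$, and between them sits a parallelogram $P$ of $m$ rows and $m$ columns. Every row of $P$ consists of $m$ consecutive terms of a derived sequence of $Y$, itself an arithmetic progression with invertible common difference by Proposition \ref{prop4}; since $n\mid m$, Lemma \ref{lemma3} makes each element of $\Zn$ occur $m/n = k\beta(n)$ times per row, so $\m_P(x)=k^2\beta(n)^2n$ is constant in $x$. The heart of the argument is then the identification $\partial^m Y = \pm X$: Proposition \ref{prop4} yields
$$\partial^m Y = AP\bigl(2^{m-1}d(1+m),\ 2^m d,\ m\bigr),$$
the congruence $m\equiv 0\pmod n$ reduces the initial term to $2^{m-1}d$, and the defining property of $\beta(n)$ gives $2^m=(2^{\beta(n)n})^k\equiv (\pm 1)^k\equiv \pm 1\pmod n$. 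Since $m\equiv 0\pmod n$, Proposition \ref{prop6} says $X$ is antisymmetric, so $\Delta X$ is closed under negation and $\m_{\Delta(-X)}(x)=\m_{\Delta X}(x)$; whichever sign occurs, $\m_{\Delta\partial^m Y}=\m_{\Delta X}$. Combining the three pieces,
$$\m_{\Delta Y}(x) = 2\m_{\Delta X}(x) + k^2\beta(n)^2 n,$$
and constancy of the left-hand side forces constancy of $\m_{\Delta X}$.

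For the second bullet, I would apply Proposition \ref{prop1} to realize $AP(d,d,m)$ with $m=k\beta(n)n-1$ as the derived sequence of $Y=AP(2^{-2}d,2^{-1}d,k\beta(n)n) = AP(2^{-1}d',d',k\beta(n)n)$ with $d'=2^{-1}d$; this $Y$ has invertible common difference and length $k\beta(n)n$, so it is balanced by the first bullet. Using $\Delta Y = Y \cup \Delta X$ (as multisets) together with $\m_Y(x)=k\beta(n)$ (from Lemma \ref{lemma3}), the difference $\m_{\Delta X}(x) = \m_{\Delta Y}(x)-\m_Y(x)$ is a constant minus a constant, hence constant. The main obstacle I expect is the clean identification $\partial^m Y = \pm X$, which is exactly what allows $\beta(n)$ to take the place of $\alpha(n)$; antisymmetry of $X$ then folds the two possible signs into a single equality of multiplicity functions, and the rest of the bookkeeping is routine given Lemmas \ref{lemma1} and \ref{lemma3}.
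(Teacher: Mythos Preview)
Your proposal is correct and follows essentially the same route as the paper: double the length so that Theorem~\ref{mainthm1} applies, decompose the larger triangle into two copies of $\Delta X$ (up to sign, handled by antisymmetry as in Lemma~\ref{lemma4}) plus a balanced parallelogram, and reduce the second bullet to the first via the primitive construction of Proposition~\ref{prop1}. The only cosmetic difference is that the paper first disposes of the case $\alpha(n)=\beta(n)$ by a direct appeal to Theorem~\ref{mainthm1} before doubling, whereas you treat both cases uniformly.
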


The proof is based on Theorem \ref{mainthm1} and on the following lemma.

\begin{lem}\label{lemma4}
Let $n$ be a positive integer and $X$ be an antisymmetric sequence of length $m\geq1$ in $\Zn$. Then we have
$$
\m_{\Delta X}(x) = \m_{\Delta X}(-x),\ \forall x\in\Zn.
$$
\end{lem}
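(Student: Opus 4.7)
The plan is to exhibit a bijection on the set of positions of $\Delta X$ that negates values, thereby pairing up occurrences of $x$ with occurrences of $-x$.

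First, I would observe (iterating the forward direction of the proposition proved immediately before the lemma) that if $X$ is antisymmetric, then every iterated derivative $\partial^{i-1} X$ is also antisymmetric. This is an immediate induction: the implication ``$X$ antisymmetric $\Rightarrow \partial X$ antisymmetric'' is exactly the ``$\Longrightarrow$'' direction already established, so it propagates through all rows of the triangle. Using the notation $\Delta X(i,j)$ of Proposition \ref{prop5}, the $i$th row $\partial^{i-1}X$ has length $m-i+1$, and antisymmetry of that row becomes the identity
$$
\Delta X(i,j) \;=\; -\,\Delta X\bigl(i,\,m-i-j+2\bigr),
$$
valid for all $1\leq i\leq m$ and all $1\leq j\leq m-i+1$.

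Next, I would introduce the map $\sigma$ on the set of positions of $\Delta X$ defined by $\sigma(i,j)=(i,\,m-i-j+2)$. A direct check shows $\sigma$ preserves the range of indices (since $1\leq j\leq m-i+1$ forces $1\leq m-i-j+2\leq m-i+1$) and that $\sigma\circ\sigma=\mathrm{id}$, so $\sigma$ is an involution, hence a bijection on positions. Combined with the identity above, this bijection sends the set $\{(i,j)\mid \Delta X(i,j)=x\}$ onto the set $\{(i,j)\mid \Delta X(i,j)=-x\}$ for every $x\in\Zn$.

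Finally I would conclude: since these two sets are in bijection, they have the same cardinality, which is precisely the statement
$$
\m_{\Delta X}(x)\;=\;\m_{\Delta X}(-x),\qquad\forall x\in\Zn.
$$
There is no real obstacle here; the only subtle point is making sure the indexing in ``antisymmetry of a sequence of length $m-i+1$'' is translated correctly into the formula for $\sigma$, and noting that fixed points of $\sigma$ (which occur when $m-i$ is even, at $j=(m-i+2)/2$, and force $2\Delta X(i,j)=0$) need no separate treatment, because they contribute equally to $\m_{\Delta X}(x)$ and $\m_{\Delta X}(-x)$ whenever $x=-x$.
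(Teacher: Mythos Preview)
Your proposal is correct and follows essentially the same approach as the paper: both arguments hinge on the fact that every iterated derivative $\partial^{i-1}X$ remains antisymmetric, so that in each row the occurrences of $x$ and $-x$ match up. You spell out the row-reflection involution $\sigma(i,j)=(i,m-i-j+2)$ explicitly, whereas the paper simply writes $\m_{\partial^i X}(x)=\m_{\partial^i X}(-x)$ and sums over rows; this is the same content at a different level of detail. One small remark: the paper's proof cites Proposition~\ref{prop9} for the preservation of antisymmetry under derivation, but that proposition is stated only for arithmetic progressions with $n$ odd, while the lemma is for arbitrary antisymmetric sequences and arbitrary $n$; your choice to invoke instead the forward direction of the general (unlabeled) proposition is the cleaner justification.
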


\begin{proof}
By Proposition \ref{prop9}, all the iterated derived sequences of $X$ are antisymmetric. This leads to
$$
\m_{\Delta X}(x) = \sum_{i=0}^{m-1}\m_{\partial^iX}(x) = \sum_{i=0}^{m-1}\m_{\partial^iX}(-x) = \m_{\Delta X}(-x),\ \forall x\in\Zn.
$$
\end{proof}

We are now ready to prove our refinement of Theorem \ref{mainthm1}.

\begin{proof}[Proof of Theorem \ref{mainthm2}]
As in the proof of Theorem \ref{mainthm1}, we derive the case $m\equiv-1\pmod{\beta(n)n}$ from the case $m\equiv0\pmod{\beta(n)n}$. Let $k$ be a positive integer. We set $m=k\beta(n)n-1$ and $X=AP(d,d,m)$. From Proposition \ref{prop1}, the arithmetic progression
$$
Y=AP(2^{-2}d,2^{-1}d,k\beta(n)n)
$$
is a primitive of the sequence $X$. Since $Y$ is an arithmetic progression with invertible common difference $2^{-1}d$ and of length $k\beta(n)n$, it follows from Lemma \ref{lemma3} that each element of $\Zn$ occurs $k\beta(n)$ times in the sequence $Y$. Since $X$ is the derived sequence of $Y$, we have
$$
\m_{\Delta X}(x) = \m_{\Delta\partial Y}(x) = \m_{\Delta Y}(x) - \m_Y(x) = \m_{\Delta Y}(x) - k\beta(n),
$$
for all $x$ in $\Zn$. Therefore, $X$ is balanced if and only if the sequence $Y$ is balanced. This completes the proof of the case $m\equiv -1\pmod{\beta(n)n}$ from the case $m\equiv0\pmod{\beta(n)n}$.

We now settle the case $m\equiv0\pmod{\beta(n)n}$. If $\alpha(n)=\beta(n)$, then this statement is a particular case of Theorem \ref{mainthm1}. Suppose now that $\alpha(n)=2\beta(n)$. Then $2^{\beta(n)n}\equiv-1\pmod{n}$. Let $k$ be a positive integer. We shall show that the sequence
$$
AP\left(2^{-1}d,d,k\beta(n)n\right)
$$
is balanced. We first set
$$
X=AP(2^{-1}d,d,2k\beta(n)n).
$$
We now consider the structure of the Steinhaus triangle $\Delta X$ depicted in Figure~\ref{fig10}. Recall that $\Delta X(i,j)$ denotes the $j$th element of the $i$th row of $\Delta X$, for every integer $1\leq i\leq 2k\beta(n)n$ and every integer $1\leq j\leq 2k\beta(n)n-i+1$.

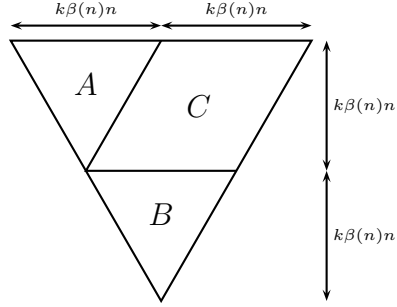
\begin{figure}[!h]
\centering
\begin{pspicture}(4,3.964)
\pspolygon(0,3.464)(4,3.464)(2,0)
\psline(2,3.464)(1,1.732)(3,1.732)
\psline{<->}(0,3.664)(2,3.664)
\rput(1,3.864){\tiny{$k\beta(n)n$}}
\psline{<->}(2,3.664)(4,3.664)
\rput(3,3.864){\tiny{$k\beta(n)n$}}
\psline{<->}(4.2,3.464)(4.2,1.732)
\rput(4.7,2.598){\tiny{$k\beta(n)n$}}
\psline{<->}(4.2,1.732)(4.2,0)
\rput(4.7,0.866){\tiny{$k\beta(n)n$}}
\rput(1,2.8867){$A$}
\rput(2.5,2.598){$C$}
\rput(2,1.1547){$B$}
\end{pspicture}
\caption{\label{fig10}Structure of $\Delta X$}
\end{figure}

The subtriangle $A$ is defined by
$$
A = \left\{ \Delta X(i,j)\ \middle|\ 1\leq i\leq k\beta(n)n\ ,\ 1\leq j\leq k\beta(n)n-i+1\right\}.
$$
Then $A$ is the Steinhaus triangle generated by the $k\beta(n)n$ first elements of $X$, that is,
$$
A = \Delta AP\left(2^{-1}d,d,k\beta(n)n\right).
$$
The subtriangle $B$ is defined by
$$
B = \left\{ \Delta X(i,j)\ \middle|\ k\beta(n)n+1\leq i\leq 2k\beta(n)n\ ,\ 1\leq j\leq 2k\beta(n)n-i+1\right\}.
$$
Then $B$ is the Steinhaus triangle generated by the derived sequence $\partial^{k\beta(n)n}X$, that is,
$$
B = \Delta\partial^{k\beta(n)n}X.
$$
Proposition \ref{prop4} leads to
$$
\partial^{k\beta(n)n}X
\begin{array}[t]{l}
= \partial^{k\beta(n)n}AP\left(2^{-1}d,d,2k\beta(n)n\right)\\
= AP\left( 2^{k\beta(n)n-1}d+2^{k\beta(n)n-1}k\beta(n)nd,2^{k\beta(n)n}d,k\beta(n)n \right).
\end{array}
$$
Since $2^{\beta(n)n}\equiv-1\pmod{n}$, it follows that
$$
\partial^{k\beta(n)n}X = AP\left( {(-1)}^{k}2^{-1}d , {(-1)}^kd , k\beta(n)n \right).
$$
If $k$ is even, then $\partial^{k\beta(n)n}X=AP\left(2^{-1}d,d,k\beta(n)n\right)$ and thus $A=B$. If $k$ is odd, then $\partial^{k\beta(n)n}X=AP\left(-2^{-1}d,-d,k\beta(n)n\right)$. Since it is an antisymmetric arithmetic progression by Proposition \ref{prop6}, it follows from Lemma \ref{lemma4} that $\m_{B}(x)=\m_{B}(-x)$ for all $x$ in $\Zn$. Therefore, we have
$$
\m_B(x)=\m_B(-x)=\m_{\Delta AP(-2^{-1}d,-d,k\beta(n)n)}(-x)=\m_{\Delta AP(2^{-1}d,d,k\beta(n)n)}(x)=\m_A(x)
$$
for all $x\in\Zn$. In all cases, we obtain
$$
\m_B(x) = \m_A(x),\ \forall x\in\Zn.
$$
Finally, the multiset $C$ is defined by
$$
C = \left\{ \Delta X(i,j)\ \middle|\ 1\leq i\leq k\beta(n)n\ ,\ k\beta(n)n-i+2\leq j\leq 2k\beta(n)n-i+1\right\}.
$$
Then each row of $C$ is composed of $k\beta(n)n$ consecutive terms of a derived sequence of $X$. Since, for every $0\leq i\leq 2k\beta(n)n-1$, the derived sequence $\partial^iX$ of $X$ is an arithmetic progression with invertible common difference $2^{i}d$ by Proposition \ref{prop4}, it follows from Lemma \ref{lemma3} that each element of $\Zn$ occurs $k\beta(n)$ times in each row of $C$. Therefore, the multiplicity function of $C$ is the constant function defined by
$$
\m_C(x) = k^2{\beta(n)}^2n,\ \forall x\in\Zn.
$$
Combining the results above, we have
$$
\m_{\Delta X}(x) = \m_A(x)+\m_B(x)+\m_C(x) = 2\m_A(x) + k^2{\beta(n)}^2n
$$
for all $x$ in $\Zn$.

We conclude that the sequence $AP(2^{-1}d,d,k\beta(n)n)$ is balanced if and only if the sequence $X = AP(2^{-1}d,d,2k\beta(n)n) = AP(2^{-1}d,d,k\alpha(n)n)$ is also balanced. This completes the proof of Theorem \ref{mainthm2}.
\end{proof}

We shall now see that this theorem answers in the affirmative Molluzzo's problem in $\Z/3^k\Z$ for all positive integers $k$ and gives a partial answer in the general odd case.

\begin{cor}
Molluzzo's problem is completely solved in $\Z/3^k\Z$ for all positive integers $k$. In other words, there exists a balanced sequence of length $m$ in $\Z/3^k\Z$ if and only if $\binom{m+1}{2}$ is divisible by $3^k$.
\end{cor}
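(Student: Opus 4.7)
The plan is to show that this corollary is an immediate specialization of Corollary~\ref{cor1} and Theorem~\ref{mainthm2}, once the value of $\beta(3^k)$ is identified. First I would invoke Corollary~\ref{cor1} with $p=3$ to translate the divisibility condition $3^k \mid \binom{m+1}{2}$ into the congruence condition $m \equiv 0$ or $-1 \pmod{3^k}$. This pins down exactly which lengths must be realized by balanced sequences, and it also gives the necessary direction of the biconditional (already recorded in Section~1).

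The second step is to compute $\beta(3^k)$. The key observation is that $2^{3} = 8 \equiv -1 \pmod 3$, so $\beta(3) = 1$ directly from the definition. By Proposition~\ref{prop7} one then has $\beta(3^k) = \beta(3) = 1$ for every $k \geq 1$, so $\beta(3^k)\cdot 3^k = 3^k$. In particular, the modulus appearing in Theorem~\ref{mainthm2} coincides exactly with the modulus of the necessary condition.

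Finally, Theorem~\ref{mainthm2} applied to $n = 3^k$ with any invertible element $d \in \Z/3^k\Z$ (for concreteness $d = 1$) produces explicit balanced arithmetic progressions of every admissible length: the sequence $AP(2^{-1},1,m)$ is balanced for every $m \equiv 0 \pmod{3^k}$, and the sequence $AP(1,1,m)$ is balanced for every $m \equiv -1 \pmod{3^k}$. Combining this with the necessary direction settles the biconditional for $\Z/3^k\Z$.

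There is essentially no obstacle: the substantive work has already been carried out in Sections~4 and~5. The reason $n = 3^k$ is so favorable is precisely that $2$ is a projective primitive root of $3$, satisfying $2^3 \equiv -1 \pmod 3$; this forces $\beta$ to be as small as possible and brings the admissible lengths into perfect alignment with the balanced lengths supplied by Theorem~\ref{mainthm2}.
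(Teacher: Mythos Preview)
Your proposal is correct and follows essentially the same route as the paper: compute $\beta(3^k)=\beta(3)=1$ via Proposition~\ref{prop7}, then invoke Theorem~\ref{mainthm2} to produce balanced sequences for every $m\equiv 0$ or $-1\pmod{3^k}$, and match these against the admissible lengths given by Corollary~\ref{cor1}. The only cosmetic difference is that you fix $d=1$ for concreteness while the paper leaves $d$ as an arbitrary invertible element.
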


\begin{proof}
Let $k$ be a positive integer. By Proposition \ref{prop7}, we have
$$
\beta(3^k) = \beta(3) = 1.
$$
Let $d$ be an invertible element in $\Z/3^k\Z$. Then, Theorem \ref{mainthm2} implies that 
\begin{itemize}
\item
$AP(2^{-1}d,d,m)$ is balanced for every positive integer $m\equiv0\pmod{3^k}$,
\item
$AP(d,d,m)$ is balanced for every positive integer $m\equiv-1\pmod{3^k}$.
\end{itemize}
Finally, from Corollary \ref{cor1}, we know that $3^k$ divides the binomial coefficient $\binom{m+1}{2}$ if, and only if, the positive integer $m$ is congruent to $0$ or $-1$ modulo $3^k$. Therefore, we have constructed balanced sequences for all admissible lengths in $\Z/3^k\Z$.
\end{proof}

For every odd number $n$, the results above, namely Theorem \ref{thm6} and Theorem \ref{mainthm2}, partly solve Molluzzo's problem in $\Zn$ in the exact proportion of $\frac{1}{2^{\omega(n)-1}\beta(n)}$, where $\omega(n)$ is the number of distinct prime factors of $n$. Indeed, if we consider the sets
$$
N(n) = \left\{m\in\N\ \middle|\ \binom{m+1}{2}\equiv0\pmod{n}\right\},
$$
and
$$
B(n) = \left\{m\in\N\ \middle|\ \exists\ \text{a\ balanced\ sequence\ in}\ \Zn\ \text{of\ length}\ m\right\},
$$
then clearly $B(n)\subset N(n)$ as pointed out in Sections 1 and 2. Moreover, Molluzzo's problem can be reformulated as the question whether $B(n)=N(n)$ for all $n>1$.

It follows from Theorem \ref{thm6} and Theorem \ref{mainthm2} that
$$
\frac{|B(n)\cap\llbracket0,k\rrbracket|}{|N(n)\cap\llbracket0,k\rrbracket|}\geq\frac{1}{2^{\omega(n)-1}\beta(n)},
$$
for all $k\geq\beta(n)n$. Since $2^{\omega(n)-1}\beta(n)\geq2$ for every odd number $n\neq3^k$, it follows that our method gives a complete solution to Molluzzo's Problem for the powers of three only. For example, for $n=5^k$, we have $2^{\omega(n)-1}\beta(n)=2$, whence our results in this case produce balanced sequences for half of the admissible lengths.

%%%%%%%%%%%%%%%%%%%%%%%%%%%%%%%%%%%%%%%%%%%%%%%%%%%%%%%%%%%%%%%%%%%%%%%%%%%%%%%%%%%%%%%%%%%%%%%%%%%%
\section{Balanced arithmetic progressions in $\Zn$ for $n$ even}%%%%%%%%%%%%%%%%%%%%%%%%%%%%%%%%%%%%
%%%%%%%%%%%%%%%%%%%%%%%%%%%%%%%%%%%%%%%%%%%%%%%%%%%%%%%%%%%%%%%%%%%%%%%%%%%%%%%%%%%%%%%%%%%%%%%%%%%%

In preceding sections we have seen that, for any odd number $n$ and any invertible element $d$ in $\Zn$, the arithmetic progressions $AP(a,d,m)$, for $m\equiv 0$ or $-1\pmod{\alpha(n)n}$, constitute an infinite family of balanced sequences. Here we study the case where $n$ is even and show that, in contrast, arithmetic progressions are almost never balanced.

\begin{thm}
Let $n$ be an even number and $a$ and $d$ be in $\Zn$. Then the arithmetic progression $X=AP(a,d,m)$ is balanced if, and only if, we have
$$
\left\{
\begin{array}{ccl}
n=2 & \text{and} & X\in\left\{ (0,1,0) , (1,1,1) , (0,1,0,1) , (1,0,1,0) \right\},\\
\text{or} &  & \\
n=6 & \text{and} & X\in\left\{ (1,3,5) , (2,3,4) , (4,3,2) , (5,3,1) \right\}.
\end{array}
\right.
$$
\end{thm}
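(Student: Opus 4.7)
The plan is to verify sufficiency by a direct triangle computation for each of the eight listed sequences, and then to establish necessity via a projection argument whose main engine is Theorem \ref{thm2} applied for successively smaller moduli.

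For necessity, suppose $X = AP(a, d, m)$ is balanced in $\Zn$ with $n$ even. First I would apply Theorem \ref{thm2} with $q = 2$: the projected sequence $\pi_2(X) = AP(\pi_2(a), \pi_2(d), m)$ must be balanced in $\Z/2\Z$. A case analysis on $\pi_2(d)$: when $\pi_2(d) = 0$, $\pi_2(X)$ is constant, its Steinhaus triangle contains $m$ copies of $\pi_2(a)$ in the first row and zeros elsewhere, and balance forces $\pi_2(a) = 1$ with $m = 3$; when $\pi_2(d) = 1$, $\pi_2(X)$ alternates, the second row is constant equal to $1$ and all subsequent rows are zero, and equating the number of ones to $m(m+1)/4$ yields $m \in \{3, 4\}$. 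In every case $m \in \{3, 4\}$, so the admissibility condition $n \mid \binom{m+1}{2}$ from Theorem \ref{thm6} restricts the even values of $n$ to the set $\{2, 6, 10\}$.

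The three remaining cases would then be handled separately. For $n = 2$, the preceding $\pi_2$-analysis already produces the four listed sequences. For $n = 6$ with $m = 3$, I would apply Theorem \ref{thm2} a second time with $q = 3$: a direct enumeration over $(\Z/3\Z)^2$ shows that the only balanced arithmetic progressions of length $3$ in $\Z/3\Z$ are $(1,0,2)$ and $(2,0,1)$. Combining these two $\pi_3$-patterns with the two admissible $\pi_2$-patterns $(0,1,0)$ and $(1,1,1)$ via the Chinese Remainder Theorem produces exactly the four candidates $(1,3,5)$, $(5,3,1)$, $(2,3,4)$, $(4,3,2)$ in $\Z/6\Z$, each of which is then verified to be balanced by a direct triangle computation.

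The most delicate case is $n = 10$ with $m = 4$, where I must show no balanced AP exists. I would apply Theorem \ref{thm2} with $q = 5$ and reduce to showing that no AP of length $4$ is balanced in $\Z/5\Z$. A clean sum argument does this cleanly: using Proposition \ref{prop5}, the sum of the ten triangle entries of $AP(a, d, 4)$ equals $26a + 39d \equiv a + 4d \pmod 5$, while any balanced multiset in $\Z/5\Z$ with each element counted twice must sum to $0 \pmod 5$; this forces $a \equiv d \pmod 5$. Substituting $a = d$ back into Proposition \ref{prop5} yields, for $m=4$ in $\Z/5\Z$, the triangle multiset $\{0, 0, d, 2d, 2d, 2d, 3d, 3d, 3d, 4d\}$, which is never balanced for any $d \in \Z/5\Z$. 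This sum-plus-substitution argument at $n = 10$ is the main technical step; the rest of the proof reduces to routine finite verifications.
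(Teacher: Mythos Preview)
Your proof is correct and shares the paper's overall architecture---project to $\Z/2\Z$ via Theorem~\ref{thm2}, bound $m$, then treat the finitely many surviving pairs $(n,m)$---but the endgame differs in a genuine way. For $n=6$ and $n=10$ you invoke Theorem~\ref{thm2} a second time, projecting further to $\Z/3\Z$ and $\Z/5\Z$ respectively; the paper instead stays in $\Z/6\Z$ and $\Z/10\Z$ and uses the positional observation that the entry $0$ must sit at the bottom vertex of the triangle (since a $0$ anywhere else would force two equal adjacent entries), which combined with Proposition~\ref{prop5} pins down one parameter and leaves a short explicit list to check by hand. Your $n=10$ treatment is arguably cleaner: the sum constraint $26a+39d\equiv a+4d\equiv 0\pmod 5$ followed by the explicit multiset $\{0,0,d,2d,2d,2d,3d,3d,3d,4d\}$ disposes of all candidates at once, whereas the paper reduces to eight sequences and rules each out individually. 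Conversely, for $n=6$ the paper's vertex argument directly produces the four balanced sequences without the separate CRT-lift-then-verify step that your approach requires (recall that both projections being balanced is only necessary, not sufficient, so that verification is not optional). Both routes are equally valid.
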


\begin{proof}
Suppose that the arithmetic progression $X=AP(a,d,m)$ is balanced. We first consider the canonical surjective morphism $\pi_2 : \Zn\twoheadlongrightarrow\Z/2\Z$ and the projected sequence $\pi_2(X)=AP\left(\pi_2(a),\pi_2(d),m\right)$ in $\Z/2\Z$ which is also balanced by Theorem \ref{thm2}. If we denote by $\Delta\pi_2\left(X\right)\left(i,j\right)$ the $j$th element of the $i$th row of $\Delta\pi_2\left(X\right)$, then Proposition \ref{prop5} implies that
$$
\Delta\pi_2(X)\left(i,j\right) = 2^{i-2}\left( 2\pi_2(a)+(2j+i-3)\pi_2(d) \right) = 0 \in \Z/2\Z,
$$
for all $i\geq3$. Therefore, for every $i\geq3$, the derived sequence $\partial^iX$ only contains zeros. Since the sequence $\pi_2\left(X\right)$ is balanced, it follows that its triangle $\Delta\pi_2\left(X\right)$ contains at least twice as many elements as $\pi_2\left(X\right)$ and its derived sequence $\partial\pi_2\left(X\right)$ and hence, the positive integer $m$ is solution of the inequality
$$
2\binom{m-1}{2}\leq\binom{m+1}{2}.
$$
Therefore $m\in\llbracket1,6\rrbracket$. Moreover, the necessary condition that the binomial coefficient $\binom{m+1}{2}$, the cardinality of the Steinhaus triangle $\Delta\pi_2\left(X\right)$, is even implies that $m=3$ or $m=4$. We now distinguish the different cases.
\begin{itemize}
\item[$\mathbf{m=3:}$]
Since $n$ divides the binomial coefficient $\binom{m+1}{2}=6$, it follows that $n=2$ or $n=6$.
\begin{itemize}
\item[$\mathbf{n=2:}$]
There exist four arithmetic progressions of length $m=3$ in $\Z/2\Z$ including two that are balanced, the sequences $X_1=(0,1,0)$ and $X_2=(1,1,1)$.
\begin{center}
\begin{tabular}{cc}
\begin{pspicture}(2.5,1.732)
\pspolygon(0.375,1.5155)(2.125,1.5155)(1.25,0)
\rput(0.75,1.299){$0$}
\rput(1.25,1.299){$1$}
\rput(1.75,1.299){$0$}
\rput(1,0.866){$1$}
\rput(1.5,0.866){$1$}
\rput(1.25,0.433){$0$}
\end{pspicture}
&
\begin{pspicture}(2.5,1.732)
\pspolygon(0.375,1.5155)(2.125,1.5155)(1.25,0)
\rput(0.75,1.299){$1$}
\rput(1.25,1.299){$1$}
\rput(1.75,1.299){$1$}
\rput(1,0.866){$0$}
\rput(1.5,0.866){$0$}
\rput(1.25,0.433){$0$}
\end{pspicture}\\
$\Delta X_1$ & $\Delta X_2$
\end{tabular}
\end{center}
\item[$\mathbf{n=6:}$]
We look for a Steinhaus triangle $\Delta X$ containing each element of $\Z/6\Z$ once. Since the equality $\Delta X\left(i,j\right)=0$ implies $\Delta X(i,j-1)=\Delta X(i+1,j-1)$ or $\Delta X(i,j+1)=\Delta X(i+1,j)$, it follows that 
$\Delta X(3,1)=0$ and hence, we have
$$
0=\Delta X(3,1) = 4(a+d) = 4\Delta X(1,2).
$$
Therefore, $\Delta X(1,2)=3$ and we look for balanced arithmetic progressions
$$
X=(a,3,-a),
$$
with $a\in\{1,2,4,5\}$. Finally, the four arithmetic progressions $X_3=(1,3,5)$, $X_4=(2,3,4)$, $X_5=(4,3,2)$ and $X_6=(5,3,1)$ are balanced.
\begin{center}
\begin{tabular}{cccc}
\begin{pspicture}(2.5,1.732)
\pspolygon(0.375,1.5155)(2.125,1.5155)(1.25,0)
\rput(0.75,1.299){$1$}
\rput(1.25,1.299){$3$}
\rput(1.75,1.299){$5$}
\rput(1,0.866){$4$}
\rput(1.5,0.866){$2$}
\rput(1.25,0.433){$0$}
\end{pspicture}
&
\begin{pspicture}(2.5,1.732)
\pspolygon(0.375,1.5155)(2.125,1.5155)(1.25,0)
\rput(0.75,1.299){$2$}
\rput(1.25,1.299){$3$}
\rput(1.75,1.299){$4$}
\rput(1,0.866){$5$}
\rput(1.5,0.866){$1$}
\rput(1.25,0.433){$0$}
\end{pspicture}
&
\begin{pspicture}(2.5,1.732)
\pspolygon(0.375,1.5155)(2.125,1.5155)(1.25,0)
\rput(0.75,1.299){$4$}
\rput(1.25,1.299){$3$}
\rput(1.75,1.299){$2$}
\rput(1,0.866){$1$}
\rput(1.5,0.866){$5$}
\rput(1.25,0.433){$0$}
\end{pspicture}
&
\begin{pspicture}(2.5,1.732)
\pspolygon(0.375,1.5155)(2.125,1.5155)(1.25,0)
\rput(0.75,1.299){$5$}
\rput(1.25,1.299){$3$}
\rput(1.75,1.299){$1$}
\rput(1,0.866){$2$}
\rput(1.5,0.866){$4$}
\rput(1.25,0.433){$0$}
\end{pspicture}\\
$\Delta X_3$ & $\Delta X_4$ & $\Delta X_5$ & $\Delta X_6$
\end{tabular}
\end{center}
\end{itemize}
\item[$\mathbf{m=4:}$]
Since $n$ divides the binomial coefficient $\binom{m+1}{2}=10$, it follows that $n=2$ or $n=10$.
\begin{itemize}
\item[$\mathbf{n=2:}$]
There exist four arithmetic progressions of length $m=4$ in $\Z/2\Z$ including two that are balanced, the sequences $X_7=(0,1,0,1)$ and $X_8=(1,0,1,0)$.
\begin{center}
\begin{tabular}{cc}
\begin{pspicture}(3,2.165)
\pspolygon(0.375,1.9485)(2.625,1.9485)(1.5,0)
\rput(0.75,1.732){$0$}
\rput(1.25,1.732){$1$}
\rput(1.75,1.732){$0$}
\rput(2.25,1.732){$1$}
\rput(1,1.299){$1$}
\rput(1.5,1.299){$1$}
\rput(2,1.299){$1$}
\rput(1.25,0.866){$0$}
\rput(1.75,0.866){$0$}
\rput(1.5,0.433){$0$}
\end{pspicture}
&
\begin{pspicture}(3,2.165)
\pspolygon(0.375,1.9485)(2.625,1.9485)(1.5,0)
\rput(0.75,1.732){$1$}
\rput(1.25,1.732){$0$}
\rput(1.75,1.732){$1$}
\rput(2.25,1.732){$0$}
\rput(1,1.299){$1$}
\rput(1.5,1.299){$1$}
\rput(2,1.299){$1$}
\rput(1.25,0.866){$0$}
\rput(1.75,0.866){$0$}
\rput(1.5,0.433){$0$}
\end{pspicture}\\
$\Delta X_7$ & $\Delta X_8$
\end{tabular}
\end{center}
\item[$\mathbf{n=10:}$]
We look for a Steinhaus triangle $\Delta X$ containing each element of $\Z/10\Z$ once. Since the equality $\Delta X\left(i,j\right)=0$ implies $\Delta X(i,j-1)=\Delta X(i+1,j-1)$ or $\Delta X(i,j+1)=\Delta X(i+1,j)$, it follows that 
$\Delta X(4,1)=0$ and hence, we have
$$
0 = \Delta X(4,1) = 4(2a+3d) = 4\Delta X(2,2).
$$
Therefore, $\Delta X(2,2)=5$. Moreover, if $2a+3d=5$, then
$$
d = 3^{-1}(5-2a) = 7(5-2a) = 5-4a.
$$
Thus, we look for balanced arithmetic progressions
$$
X=(a,5-3a,3a,5-a)
$$
with $a\in\{1,2,3,4,6,7,8,9\}$. Finally, there is no balanced arithmetic progression in $\Z/10\Z$.

\end{itemize}
\end{itemize}
\end{proof}

%%%%%%%%%%%%%%%%%%%%%%%%%%%%%%%%%%%%%%%%%%%%%%%%%%%%%%%%%%%%%%%%%%%%%%%%%%%%%%%%%%%%%%%%%%%%%%%%%%%%
\section{Concluding remarks and open subproblems}%%%%%%%%%%%%%%%%%%%%%%%%%%%%%%%%%%%%%%%%%%%%%%%%%%%
%%%%%%%%%%%%%%%%%%%%%%%%%%%%%%%%%%%%%%%%%%%%%%%%%%%%%%%%%%%%%%%%%%%%%%%%%%%%%%%%%%%%%%%%%%%%%%%%%%%%

We have seen, throughout Sections 4 and 5, that in $\Zn$, for $n$ odd, arithmetic progressions with invertible common difference give infinitely many balanced sequences. Particularly, in every $\Z/3^k\Z$, they yield a full solution to Molluzzo's problem. In Section 6, we have proved that arithmetic progressions are almost never balanced in $\Zn$ for $n$ even. The following \textit{particular cases of Molluzzo's problem} remain open and are of particular interest.

\begin{prob2}
Do there exist infinitely many balanced sequences in $\Zn$ for every even $n\geq4$?
\end{prob2}

\begin{prob2}
Let $n$ be an odd number. Does there exist a balanced sequence of length $m$ for every multiple $m$ of $n$?
\end{prob2}

There are some indications that Problem 2 may be more tractable than the full Molluzzo's problem.

%%%%%%%%%%%%%%%%%%%%%%%%%%%%%%%%%%%%%%%%%%%%%%%%%%%%%%%%%%%%%%%%%%%%%%%%%%%%%%%%%%%%%%%%%%%%%%%%%%%%
\section*{Acknowledgments}%%%%%%%%%%%%%%%%%%%%%%%%%%%%%%%%%%%%%%%%%%%%%%%%%%%%%%%%%%%%%%%%%%%%%%%%%
%%%%%%%%%%%%%%%%%%%%%%%%%%%%%%%%%%%%%%%%%%%%%%%%%%%%%%%%%%%%%%%%%%%%%%%%%%%%%%%%%%%%%%%%%%%%%%%%%%%%

The author would like to thank Prof. Shalom Eliahou for introducing him to the subject and for his help in preparing this paper.

\nocite{*}
\bibliographystyle{plain}
\bibliography{mabiblio}

\noindent Jonathan Chappelon\\
Laboratoire de Math\'ematiques Pures et Appliqu\'ees Joseph Liouville, FR 2956 CNRS\\
Universit\'e du Littoral C\^ote d'Opale\\
50 rue F. Buisson, B.P. 699, F-62228 Calais Cedex, France\\
e-mail: jonathan.chappelon@lmpa.univ-littoral.fr

\end{document}